\pdfoutput=1
\documentclass[11pt]{amsart}

\usepackage[
text={450pt,575pt},
headheight=9pt,
centering
]{geometry}

\usepackage[english]{babel}
\usepackage[utf8]{inputenc}
\usepackage{paralist,url,verbatim}
\usepackage{amscd}
\usepackage[T1]{fontenc}
\usepackage{xspace}
\usepackage{comment}
\usepackage{amsmath,amsthm,amssymb,amsfonts}
\usepackage{mathtools}
\usepackage{enumitem}
\usepackage{wasysym}
\usepackage{color}
\usepackage[bookmarksopen=true]{hyperref}
\definecolor{darkblue}{RGB}{0,0,160}
\hypersetup{
colorlinks,%
citecolor=black,%
filecolor=black,%
linkcolor=darkblue,%
urlcolor=darkblue
}

\theoremstyle{plain}
\newtheorem{theorem}{Theorem}[section]
\newtheorem*{mainTheorem}{Theorem \ref{t:generalThmFirstStatement}}

\newtheorem{question}[theorem]{Question}

\newtheorem{corollary}[theorem]{Corollary}
\newtheorem*{mainCorollary}{Corollary \ref{c:regularityArbitrary}}
\newtheorem{lemma}[theorem]{Lemma}
\newtheorem{proposition}[theorem]{Proposition}

\theoremstyle{definition}
\newtheorem{remark}[theorem]{Remark}
\newtheorem{definition}[theorem]{Definition}

\newtheorem{example}[theorem]{Example}

\renewcommand\>{\rangle}
\newcommand\<{\langle}

\newcommand{\sta}{\operatorname{star} }
\newcommand{\reg}{\operatorname{reg} }
\newcommand{\vcd}{\operatorname{vcd} }
\newcommand{\ima}{\operatorname{im} }
\newcommand{\pdim}{\operatorname{projdim}}
\newcommand{\rank}{\operatorname{rank} }
\newcommand{\chara}{\operatorname{char} }

\newcommand{\Th}{\operatorname{Th} }

\renewcommand{\P}{\mathcal{P}}
\newcommand{\Ww}{\mathcal{W}}

\newcommand{\N}{\mathbb{N}}
\newcommand{\Z}{\mathbb{Z}}
\newcommand{\Q}{\mathbb{Q}}
\newcommand{\FF}{\mathbb{F}}

\let\para\S
\renewcommand{\S}{\mathcal{S}}
\newcommand{\C}{\mathcal{C}}
\newcommand{\NN}{\mathcal{N}}
\newcommand{\ZZ}{\mathbb{Z}}

\newcommand{\kk}{\Bbbk}

\newcommand{\bfC}{\mathbf{C}}

\newcommand{\pp}{\mathfrak{p}}
\newcommand{\mm}{\mathfrak{m}}
\newcommand{\D}{\Delta}
\newcommand{\lk}{\operatorname{lk}}
\newcommand{\cd}{\operatorname{cd}}
\newcommand{\ind}{\operatorname{index}}
\newcommand{\Tor}{\operatorname{Tor}}
\newcommand{\GL}{\operatorname{GL}}
\newcommand{\Cay}{\operatorname{Cay}}
\newcommand{\Ker}{\operatorname{Ker}}

\newcommand{\minel}{\hat{0}}

\newcommand{\swi}{{\'S}wi{\k{a}}tkowski\xspace}

\newcommand{\ritter}{\mathrm{S}}
\newcommand{\face}[1]{2^{(#1)}}

\newcommand{\excise}[1]{}

\begin{document}

\title{Linear syzygies, hyperbolic Coxeter groups and regularity}

\author[A.~Constantinescu]{Alexandru Constantinescu}
\address{
Mathematisches Institut,
Freie Universität Berlin,
Arnimallee 3, 14195 Berlin, Germany}
\urladdr{\url{http://userpage.fu-berlin.de/aconstant/Main.html}}

\author[T.~Kahle]{Thomas Kahle}
\address {Fakultät für Mathematik,
OvGU Magdeburg, Universitätsplatz 2, 39106 Magdeburg, Germany}
\urladdr{\url{http://www.thomas-kahle.de}}

\author[M.~Varbaro]{Matteo Varbaro}
\address{Dipartimento di Matematica,
 Universit\`a di Genova,
 Via Dodecaneso 35, Genova 16146, Italy}
\urladdr{\url{http://www.dima.unige.it/~varbaro/}}

\subjclass[2010]{Primary: 13F55; 20F55; Secondary: 13D02}

\keywords{Stanley-Reisner ring, simplicial complex, syzygy, hyperbolic
Coxeter group, flag-no-square complex}

\thanks{Large parts of this research were carried out at Mathematisches
For\-schungs\-in\-sti\-tut Oberwolfach within the Research in Pairs
program.  The work was continued with support from the MIUR-DAAD Joint
Mobility Program (Proj. n. 57267452).}

\begin{abstract}
We show that the virtual cohomological dimension of a
Coxeter group is essentially the regularity of the Stanley--Reisner
ring of its nerve.  Using this connection between geometric group theory
and commutative algebra, as well as techniques from the
theory of hyperbolic Coxeter groups, we study the behavior of the
Castelnuovo--Mumford regularity of square-free quadratic monomial
ideals.  We construct examples of such ideals which exhibit
arbitrarily high regularity after linear syzygies for arbitrarily many
steps.  We give a doubly logarithmic bound on the regularity as a
function of the number of variables if these ideals are
Cohen--Macaulay.
\end{abstract}

\maketitle

\section{Introduction}
The \emph{Castelnuovo--Mumford regularity} captures the complexity of
finitely generated graded $R$-modules, where $R = \kk[x_1,\dots,x_n]$
is a standard graded polynomial ring in $n$ variables over a
field~$\kk$.  We focus on the case of modules of the kind $R/I$, where
$I$ is a homogeneous ideal of~$R$.  A fundamental question is how big
the regularity of $R/I$ can be, when $I\subseteq R$ is generated in
fixed degree.  The following are some important results in this area.

\begin{enumerate}[label=\roman*),ref=\roman*]
\item\label{it:mayrmeyer} For any $d\geq 2$, Mayr and Meyer
\cite{mayr82:_compl_word_probl_commut_semig_polyn_ideal} provided
ideals $I\subseteq R$ generated in degrees $\leq d$ for which
$\reg R/I $ is doubly exponential in the number of variables~$n$, as
explained by Bayer and Stillman in~\cite{bayer1988complexity}.
\item Caviglia and Sbarra showed in \cite{caviglia2005characteristic}
that $\reg R/I\leq (2d)^{2^{n-2}}$ provided $I$ is generated in
degrees $\leq d$.
\item Ananyan and Hochster \cite{ananyanStillmanC} proved that, if $I$
is generated by $r$ forms of degrees $\leq d$, then
$\pdim R/I\leq \phi(r,d)$ provided the characteristic of $\kk$ is zero
or larger than $d$ (here $\phi$ is a function not depending on the
number of variables $n$). This solves Stillman's conjecture
\cite{Peeva2009open} in characteristic zero or bigger than~$d$.  By a
result of Caviglia (see for example
\cite[Theorem~2.4]{mccullough2013bounding}), projective dimension can
be equivalently replaced by regularity in the above statement.
\item McCullough and Peeva provided in \cite{mccullough16:_count}
examples of homogeneous prime ideals $\pp\subseteq R$ such that
$\reg R/\pp$ is not bounded by any polynomial function in the
multiplicity. In particular, this shows that the Eisenbud--Goto
conjecture \cite{eisenbud1984linear} is false.
\item Caviglia, Chardin, McCullough, Peeva and the third named author
noticed in \cite{CMPV17} that, if $\kk$ is algebraically closed, there
exists a function $\phi(e)$ bounding $\reg R/\pp$ from above whenever
$\pp$ is a homogeneous prime ideal of multiplicity~$e$.
\end{enumerate}

The Castelnuovo--Mumford regularity of $R/I$ can be read off the
graded Betti numbers $\beta_{ij}$ of $R/I$ as
$\reg R/I=\max\{j-i:\beta_{ij}\neq 0\}$ (see Section~\ref{s:prelim_CA}
for preliminaries on commutative algebra).  The Mayr--Meyer ideals
have the property that $\beta_{2j}\neq 0$ for a certain
$j>d^{2^{n/10}}$.  That is, their eventually high regularity is
visible early in the resolution, indicating a possible connection
between different homological degrees.  Part of the purpose of the
present paper is to investigate the possibilities for such
connections.  Specifically, we study the behavior of the regularity of
free resolutions that stay linear until a certain homological degree.
As an example for questions concerning the limit behavior of
regularity consider the following open problem.

\begin{question}\label{mfoqs}
Is there a family of quadratically generated ideals
$\{I_n\subseteq R=\kk[x_1,\dots,x_n]\}_{n\in\N}$ with linear syzygies
such that
\[
\lim_{n\to \infty}\frac{\reg R/I_n}{n}>0 \quad \text{?}
\]
\end{question}

Following Green and Lazarsfeld \cite[Section~3a]{green1986projective},
we say that, given an integer $p\geq 1$, $R/I$ \emph{satisfies
property~$N_p$} if $\beta_{ij}=0$ for all $1\leq i\leq p$ and
$j\neq i+1$.  So $R/I$ satisfies property $N_1$ if and only if $I$ is
quadratically generated, it satisfies property $N_2$ if and only if
$I$ is quadratically generated and has linear first syzygies, and so on. 
The \emph{Green--Lazarsfeld index} of $R/I$,
denoted by $\ind R/I$, is the largest $p$ such that $R/I$ satisfies
$N_p$ where by convention, $\ind R/I=\infty$ if $I$ has a 2-linear
resolution, and $\ind R/I=0$ if $I$ is not quadratically generated.
If $I$ is a Mayr--Meyer ideal, then its Green--Lazarsfeld index is at
most one.

As a consequence of Eisenbud--Schreyer's construction of pure modules,
a syzygy degree that appears in a free resolution can be unrelated to
all earlier parts of the resolution~\cite{eisenbud2009betti}.  A
construction due to Ullery shows that for any $p,k\in\N$, $k>p+1$,
there even is a homogeneous ideal $I\subseteq R$ such that $R/I$
satisfies $N_p$ and $\beta_{p+1,k}\neq 0$, see~\cite{Ull14}.  These
constructions need a large number of variables in $R$, though, and are
not efficient enough for Question~\ref{mfoqs}.
Due to the flexibility of resolutions of general ideals, it is
interesting to look at more restricted classes.  For example, Koszul
algebras cannot exhibit extremal behaviour as above.  It is known,
however, that for all $p\ge 2$ there exist families of homogeneous
ideals $I_n\subset R$ such that $R/I_n$ is a Koszul algebra satisfying
$N_p$ and
$\lim_{n\to \infty}(\reg R/I_n) /
{\sqrt[\leftroot{-1}\uproot{2}p]{n}}>0$
\cite[Section~6]{avramov2013subadditivity}.

In this paper we are interested in monomial ideals.  Here the
situation is even more rigid as the following result by Dao, Huneke
and Schweig~\cite{dao2013bounds} illustrates.  If $I\subset R$ is a
square-free monomial ideal such that $R/I$ satisfies $N_p$ for some
$p\geq 2$, then
\[
\reg R/I\leq \log_{\frac{p+3}{2}}\left(\frac{n-1}{p}\right) +2.
\]
In particular, a family which gives a positive answer to Question
\ref{mfoqs} cannot consist of monomial ideals.  In
Section~\ref{s:topHomology} we derive a new doubly logarithmic bound
when $R/I$ is Cohen--Macaulay.

The main motivation for the present paper is the following question:
\begin{question}\label{generalquestion}
Fix an integer $p\geq 2$.  Is there a bound $r(p)$ (independent of
$n$) such that $\reg R/I \le r(p)$ for all monomial ideals
$I\subset R$ such that $R/I$ satisfy~$N_p$?
\end{question}

For $p=2$, a negative answer has been given by the authors
in~\cite{CKV15}.  If $R/I$ is Gorenstein, the answer is positive
by~\cite[Theorem~4]{CKV15}.  If $R/I$ is Cohen--Macaulay, then the
answer is unknown~(Question~\ref{q:boundWithCM}).  In this paper we
give a negative answer for arbitrary~$p$ and begin the search for
constructions that realize the negative answer with as few variables
as possible.

These investigations lead us to the consideration of a connection
between square-free monomial ideals and  Coxeter groups.
It starts from the observation that square-free monomial ideals with
property $N_2$ correspond to right-angled hyperbolic Coxeter groups
(see Section~\ref{s:hyperbolicGroups}).  The study of the geometry and
topology of such groups contains many ideas that we feel can be useful
for commutative algebra.  In Section~\ref{s:vcdmeetsreg} we start to
develop this connection,
proving as a cornerstone the following identity of homological invariants (Theorem \ref{t:cd=reg}).
\begin{equation}\label{eq:fundamentalformula}
\vcd W=\max_{\chara \kk}\{\reg \kk[\NN] \},
\end{equation}
where $W$ is a Coxeter group with nerve $\NN(W)$, $\vcd W$ is the
virtual cohomological dimension of~$W$, and $\kk[\NN(W)]$ is the
Stanley--Reisner ring of the simplicial complex~$\NN(W)$. As the
regularity of a Stanley--Reisner ring depends only on the
characteristic of the field, the maximum is taken over all possible
characteristics, choosing one field for each.

We see~\eqref{eq:fundamentalformula} as a general tool to transfer
results from Coxeter group theory to combinatorial commutative algebra
and vice versa.  For example, when $p=2$,
Question~\ref{generalquestion} is equivalent to the following question
of Gromov:
\begin{question}\label{gromovquestion}
Is there a global bound on the virtual cohomological dimension of
hyperbolic right-angled Coxeter groups?
\end{question}
In fact, a right-angled Coxeter group $W$ is hyperbolic if and only if
$\kk[\NN(W)]$ satisfies $N_2$.  As an
immediate consequence of~\eqref{eq:fundamentalformula} and the bound
of \cite{dao2013bounds} we get:

\begin{corollary}
If $W$ is a hyperbolic right-angled Coxeter group with $n$ generators,
then
\[
\vcd W\leq \log_{\frac{5}{2}}\left(\frac{n-1}{2}\right) +2.
\]
\end{corollary}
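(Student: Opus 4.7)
The plan is to assemble the corollary from three ingredients already collected in the introduction: the identity \eqref{eq:fundamentalformula}, the characterization of hyperbolic right-angled Coxeter groups via property $N_2$ of the Stanley--Reisner ring of the nerve, and the Dao--Huneke--Schweig bound for square-free monomial ideals satisfying $N_p$.

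First I would fix a field $\kk$ of arbitrary characteristic and consider the Stanley--Reisner ring $\kk[\NN(W)]$. Since $W$ has $n$ generators, the nerve $\NN(W)$ has $n$ vertices, so $\kk[\NN(W)] = R/I$ for a square-free monomial ideal $I$ inside a polynomial ring $R$ in $n$ variables. The hypothesis that $W$ is hyperbolic translates, via the correspondence recorded in Section~\ref{s:hyperbolicGroups}, into the statement that $\kk[\NN(W)]$ satisfies property~$N_2$.

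Next I would invoke the Dao--Huneke--Schweig bound quoted after Question~\ref{mfoqs}, applied with $p=2$: any square-free monomial ideal whose quotient ring satisfies $N_2$ has
\[
\reg R/I \leq \log_{\frac{5}{2}}\!\left(\frac{n-1}{2}\right) + 2.
\]
Because this bound holds uniformly in the characteristic of $\kk$, it persists after taking the maximum of $\reg \kk[\NN(W)]$ over all characteristics.

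Finally I would apply the identity \eqref{eq:fundamentalformula} of Theorem~\ref{t:cd=reg}, which states $\vcd W = \max_{\chara \kk}\{\reg \kk[\NN(W)]\}$, to convert the regularity bound into the desired bound on $\vcd W$. No step presents a genuine obstacle here: the content of the corollary lies entirely in the nontrivial inputs cited above, and the proof is essentially a one-line composition once those inputs are in place.
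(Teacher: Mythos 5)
Your proposal is correct and follows exactly the paper's own (implicit) argument: hyperbolicity of a right-angled Coxeter group is equivalent to property $N_2$ for $\kk[\NN(W)]$, the Dao--Huneke--Schweig bound with $p=2$ controls the regularity in every characteristic, and Theorem~\ref{t:cd=reg} converts this into the bound on $\vcd W$. Nothing is missing.
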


Gromov's question had already been answered negatively
in~\cite{januszkiewicz2003hyperbolic}. Later on, new examples were
constructed by Osajda in \cite{osajda2010construction}.  Let
$\face{\D}$ denote the face complex of a simplicial complex $\D$
(Definition~\ref{d:facecomplex}).  Exploiting ideas from Osajda's
construction and~\eqref{eq:fundamentalformula}, we prove
\begin{mainTheorem}
Let $I=I_{\D}\subseteq R$ be a square-free quadratic monomial
ideal. If $\chara(\kk)=0$, then there is a positive integer $N$ and a
square-free monomial ideal
$I'=I_{\D'}\subseteq R' = \kk[y_1,\dots,y_N]$ such that:
\begin{enumerate}[label=\textup{\roman*)},ref=\roman*]
\item $\reg R'/I'= \reg R/I+1$;
\item $\ind R'/I'=\ind R/I$;
\item For each vertex $v$ of $\D'$, $\lk_{\D'}v=\face{\D}$.
\end{enumerate}
\end{mainTheorem}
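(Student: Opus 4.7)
The plan is to adapt Osajda's construction \cite{osajda2010construction} of hyperbolic right-angled Coxeter groups of arbitrarily large virtual cohomological dimension into a purely combinatorial construction on the nerve, and then to transfer the conclusion to regularity via the fundamental identity \eqref{eq:fundamentalformula}. In one direction the strategy is geometric: build a flag simplicial complex $\Delta'$ whose local structure (vertex links) is prescribed to be $\face{\D}$, analyze the associated Coxeter group $W' = W_{\Delta'}$, and show $\vcd W' = \vcd W + 1$. In the other direction, the identity with regularity and the good behavior of Stanley--Reisner rings of face complexes under the Green--Lazarsfeld index will yield the remaining claims.

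First I would construct $\Delta'$ explicitly, starting from $\Delta$, so as to force property (iii): the construction is local, prescribing around each vertex a star isomorphic to the cone over $\face{\D}$, and then gluing these stars together in a ramified fashion so that the resulting complex is flag and every vertex link is exactly $\face{\D}$. I would then verify (iii) directly from the combinatorial definition and check that $\Delta'$ is simply-connected (in the cases where this matters for the Coxeter-group interpretation). In particular, because each vertex link is $\face{\D}$, which is itself flag whenever $\D$ is, $\D'$ will be flag, i.e.~$I'$ is quadratic.

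For property (i), I would use the Davis--Moussong realization of $W'$: cocompact action on a CAT(0) cube complex $\Sigma_{W'}$, which at each vertex looks like the cone over the Davis chamber determined by the vertex link. Since all links are $\face{\D}$, the building blocks of $\Sigma_{W'}$ are one dimension higher than those of $\Sigma_W$, and a standard spectral sequence or direct homological argument shows $\vcd W' = \vcd W + 1$. Applying \eqref{eq:fundamentalformula} in characteristic zero and using the hypothesis that the maximum over characteristics is realized at $\chara \kk = 0$ for~$\D$ (and transferring this to $\D'$ via the link structure), one gets
\[
\reg R'/I' = \vcd W' = \vcd W + 1 = \reg R/I + 1.
\]

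For property (ii), I would invoke Hochster's formula: the Betti numbers $\beta_{i,j}(R'/I')$ are controlled by reduced homology of induced subcomplexes of $\D'$. Because every small induced subcomplex of $\D'$ lies inside a star, hence inside a copy of $\face{\D}$, the linearity pattern of syzygies in $R'/I'$ up to homological degree $\ind R/I$ is dictated by that of the face complex of~$\D$, and the face-complex operation is known to preserve property $N_p$ for $p \geq 2$; an Alexander-duality or direct combinatorial check then shows $\ind R'/I' = \ind R/I$. The main obstacle will be the construction itself: realizing a flag complex with all vertex links equal to a prescribed complex (this is where Osajda's ramification idea enters) and simultaneously controlling both the vcd and the index. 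In particular, proving the \emph{upper} bound $\vcd W' \leq \vcd W + 1$ (ruling out an excessive jump in regularity) is the delicate step, and is where the hypothesis $\chara \kk = 0$ is crucially used to match the algebraic invariant to the geometric one.
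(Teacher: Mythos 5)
Your outline has the right inspiration (Osajda's construction, vertex links equal to $\face{\D}$, the Coxeter dictionary), but two of its load-bearing steps fail as stated. First, the route to item (i) through the identity \eqref{eq:fundamentalformula} does not give what you claim: that identity reads $\vcd W=\max_{\chara\kk}\reg\kk[\NN]$, a maximum over \emph{all} characteristics, whereas the theorem is about one fixed field of characteristic zero, and there is no hypothesis (nor any reason) that this maximum is attained at $\chara\kk=0$ for $\D$ or for $\D'$ -- you invoke such a "hypothesis", but it is not part of the statement. In characteristic zero the matching group invariant is $\vcd_\Q$, not $\vcd$, and the paper explicitly remarks (after Lemma~\ref{l:regularityUp}) that the $\vcd$ statement and the characteristic-zero regularity statement do not imply one another. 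The paper therefore proves $\reg\kk[\D']=\reg\kk[\D]+1$ directly: the inequality $\leq$ is the soft, purely algebraic part, via Proposition~\ref{p:reglink} (some vertex link drops the regularity by at most one) combined with $\reg\kk[\face{\D}]=\reg\kk[\D]$ (Proposition~\ref{p:regFaceEqual}, proved by Alexander duality, Lyubeznik's theorem and monomial maps); the inequality $\geq$ is where $\chara\kk=0$ is genuinely used, through an adaptation of Osajda's cocycle map from relative cochains on the Davis complex to cochains on the finite quotient $X=\Sigma/H$, followed by Hochster's formula. You place both the delicacy and the characteristic-zero assumption on the upper bound; in fact the upper bound is the easy direction, and "the building blocks of $\Sigma_{W'}$ are one dimension higher, hence $\vcd W'=\vcd W+1$ by a standard spectral sequence" is not an argument -- the dimension of the Davis complex does not compute $\vcd$, and this jump is exactly the nontrivial content.

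Second, the construction of $\D'$ and item (ii). "Gluing stars in a ramified fashion so that every vertex link is $\face{\D}$ and the result is flag" is precisely the hard point and is not carried out; in the paper $\D'=\ritter(\D,p+3)$ is the quotient of the thickening $\Th(\Sigma)$ of the Davis complex of $\Ww(\D)$ by a torsion-free finite-index subgroup $H$ of displacement at least $p+3$, whose existence rests on virtual torsion-freeness and residual finiteness of $\Ww(\D)$. Your argument for $\ind R'/I'=\ind R/I$ asserts that every small induced subcomplex of $\D'$ sits inside a star; this is false in general -- a short induced cycle can pass through several stars, and the quotient does create cycles of length equal to the displacement of $H$. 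Excluding induced cycles of length less than $p+3$ requires both the global $(p+3)$-largeness of $\Th(\Sigma)$ (Lemma~\ref{l:thickeningklarge}, via the Januszkiewicz--\swi local-to-global criterion and simple connectedness) and the displacement condition on $H$ (Lemma~\ref{l:QuotDisplace}); only with these in hand does Haglund's result $\ind\kk[\face{\D}]=\ind\kk[\D]$ close the argument as in the paper. Without a concrete construction certifying these two points, both (ii) and the flagness you need for (i) remain unproved.
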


As a corollary, we get a negative answer to Question
\ref{generalquestion}:
\begin{mainCorollary}
For any positive integers $p$ and $r$, there exists a square-free monomial
ideal $I\subseteq R = \kk[x_1,\ldots ,x_{N(p,r)}]$, such that $R/I$
satisfies $N_p$ and $\reg R/I = r$.
\end{mainCorollary}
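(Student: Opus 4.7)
My plan is to derive the corollary by iteratively applying the Main Theorem to a suitable base case. Concretely, for each $p \ge 2$, I would first exhibit a square-free quadratic monomial ideal $I^{(0)}$ with $\reg R^{(0)}/I^{(0)} = 2$ and $\ind R^{(0)}/I^{(0)} = p$. The Main Theorem then serves as an increment-by-one machine: applying it $r - 2$ times in characteristic zero produces a square-free monomial ideal $I$ with $\reg R/I = r$ and $\ind R/I = p$, and in particular satisfying $N_p$. The degenerate case $r = 1$ is handled by the ideal $(x_1 x_2)$, whose resolution is $2$-linear so that $\reg = 1$ and $\ind = \infty \ge p$; the case $p = 1$ with $r \ge 2$ amounts to any square-free quadratic monomial ideal of the correct regularity, for instance a complete intersection of $r$ quadrics.

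For the base case I would take $\Delta_0 = C_{p+3}$, the cycle on $p + 3$ vertices viewed as a one-dimensional simplicial complex, and set $I^{(0)} := I_{\Delta_0}$; equivalently, $I^{(0)}$ is the edge ideal of the complement graph $\overline{C_{p+3}}$. By Hochster's formula,
\[
\beta_{i,j}(\kk[\Delta_0]) \;=\; \sum_{|W|=j} \dim_\kk \tilde H_{j - i - 1}((\Delta_0)_W; \kk).
\]
Every proper induced subcomplex $(\Delta_0)_W$ with $W \subsetneq V$ is a disjoint union of paths, hence has only $\tilde H_0$-homology and contributes only to the linear strand $\beta_{i, i+1}$. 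The unique induced subcomplex with higher reduced homology is the full cycle itself, with $\tilde H_1(C_{p+3}; \kk) = \kk$. By Hochster this pinpoints a single non-linear Betti number $\beta_{p+1,\, p+3} = 1$, and yields $\reg R^{(0)}/I^{(0)} = 2$ and $\ind R^{(0)}/I^{(0)} = p$ as claimed.

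I expect the genuine substance of the proof to sit in the construction of the base case; the subsequent iteration is a purely formal consequence of the Main Theorem. The feature of the cycle that makes it the right choice is that all its proper induced subcomplexes are forests, hence acyclic in positive degrees, so the unique topological obstruction in the Betti table can be pushed arbitrarily far to the right simply by enlarging $n$, which is exactly what is needed to prescribe an arbitrary Green--Lazarsfeld index.
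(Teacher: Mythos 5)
Your construction is exactly the paper's: the base case is the $(p+3)$-cycle (regularity $2$, Green--Lazarsfeld index $p$), and the regularity is then pushed up one step at a time by iterating Theorem~\ref{t:generalThmFirstStatement}; your Hochster computation for the cycle and your treatment of the degenerate cases $r=1$ and $p=1$ are fine (though the $4$-cycle already handles $p=1$, $r\ge 2$).

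The one genuine omission is the field. The corollary is stated for the field $\kk$ fixed in the paper, which is arbitrary, while Theorem~\ref{t:generalThmFirstStatement} (and hence your iteration) only applies in characteristic zero; as written, your argument proves the statement only when $\chara\kk=0$. Over a field of positive characteristic the same complexes $\D_r$ a priori satisfy only $\reg\kk[\D_r]\ge\reg\Q[\D_r]=r$ (the rational cohomology class detected by Hochster's formula survives to any coefficient field), and the property $N_p$ is characteristic-free since it is the combinatorial condition of $(p+3)$-largeness, but the equality $\reg\kk[\D_r]=r$ needs a separate argument. The paper supplies it: if $\reg\kk[\D_r]>\reg\Q[\D_r]$ for some $\kk$, then by Lemma~\ref{l:faceVSthick}, Proposition~\ref{p:reglink} and Proposition~\ref{p:regFaceEqual} (every vertex link of $\D_r$ is $\face{\D_{r-1}}$, whose regularity equals that of $\D_{r-1}$, and some link loses at most one from the regularity) one gets $\reg\kk[\D_{r-1}]>\reg\Q[\D_{r-1}]$, and descending inductively this contradicts the fact that the $(p+3)$-cycle has regularity $2$ in every characteristic. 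Equivalently: the upper bound $\reg\kk[\D_r]\le\reg\kk[\D_{r-1}]+1$ holds over every field, and combined with the lower bound from $\Q$ it pins the regularity at exactly $r$. You should add this step (or explicitly restrict the claim to characteristic zero, which is weaker than what the paper asserts).
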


The proofs of these statements are contained in
Section~\ref{s:InductiveConstruction}.  The crux of the corollary is
that the number of indeterminates $N(p,r)$ depends on the desired $r$
and~$p$.  In Section \ref{sec:bound-numb-vert} we give an explicit
upper bound for the  minimal number of variables in the corollary
(Theorem~\ref{t:boundOnVertices}).

The following Section~\ref{sec:preliminaries} contains some
preliminaries that we hope will be useful to readers not already
initiated in commutative algebra and geometric group theory.
Section~\ref{s:SRHomRemarks} gathers some new homological properties
of Stanley--Reisner rings inspired by the developments in this paper,
but potentially useful beyond.  In Section~\ref{s:topHomology} we
prove a new doubly logarithmic upper bound on the regularity of
Stanley--Reisner rings of complexes with top homology and
property~$N_p$ (Theorem~\ref{t:tophomology}), which yields the same
bound for all Cohen--Macaulay Stanley--Reisner rings with
property~$N_p$ (Corollary~\ref{c:doubleLog}).
Section~\ref{s:vcdmeetsreg} establishes the fundamental
equality~\eqref{eq:fundamentalformula}.  Finally,
Sections~\ref{s:InductiveConstruction} and~\ref{sec:bound-numb-vert}
give Theorem~\ref{t:generalThmFirstStatement} and an upper bound on
the number of variables necessary for arbitrary regularity with
property~$N_p$ (Theorem~\ref{t:boundOnVertices}).

\section{Preliminaries}\label{sec:preliminaries}
As this paper touches upon the somewhat separated topics of geometric
group theory, commutative algebra, and combinatorics, we introduce
some preliminaries first.

\subsection{Cell  complexes}

A \emph{poset} is a partially ordered set $(\P,\leq)$. For every
element $p \in \P$ we define the subposets
$\P_{\leq p} = \{q\in\P: q\leq p\}$ and
$\P_{\geq p} = \{q\in\P: q\ge p\}$.  We do not assume that $\P$ is
finite.
\begin{definition}\label{d:cellComplex}
An \emph{(abstract) convex cell complex} is a poset $\P$ that
satisfies the following two conditions:
\begin{enumerate}[label=\roman*),ref=\roman*]
\item For each $p \in \P$, the subposet $\P_{\leq p}$ is isomorphic to
the poset of faces of some finite convex polytope (including the empty face).
\item For any $p_1,p_2 \in \P$ the poset
$\P_{\leq p_1} \cap \P_{\leq p _2}$ contains a greatest element.
\end{enumerate}
The elements of $\P$ are called \emph{faces}, and the maximal elements
are called \emph{facets}.  If each of the convex polytopes in
condition (i) are simplices (respectively cubes), then $\P$ is an
\emph{abstract simplicial complex} (respectively an \emph{abstract
cubical complex}).
\end{definition}
Conditions (i) and (ii) imply that, if $\P\neq \emptyset$, it has a
unique minimal element $\minel$, and a well defined rank function. The
minimal element corresponds to the empty face, and the rank function
defines the dimension of a face: $\dim(p) = \rank(p)-1$. The
0-dimensional faces are called \emph{vertices} and the 1-dimensional
faces are called \emph{edges}. The \emph{1-skeleton} of a complex $\P$
is the subposet of elements of rank at most 2.  We also interpret
faces as finite sets of vertices:
$F=\{\text{rank 1 elements of } \P_{\le F}\}$.  In this
interpretation, the partial order is inclusion of sets. This way, a
cell complex is a collection of finite subsets of a (possibly
infinite) vertex set. A cell complex is thus a simplicial complex if
the collection is closed under taking subsets. We can always speak of
the cardinality of a face; however, the rank corresponds to the
cardinality of faces only for simplicial complexes. \emph{Nonfaces}
are collections of vertices which do not correspond to any face. These
can also be ordered by inclusion, and minimal nonfaces are well
defined.

A convex cell complex is a \emph{cell} if it has a unique maximal
element $F_0$; the \emph{boundary} of the cell is the poset
$\P\setminus\{F_0\}$.  The subcomplex of $\P$ \emph{induced} by a
nonempty subset $V$ of its vertex set is
$\P|_{V} = \bigcup_{p\in V} \P_{\ge p}\cup \{\minel\}$.  Some authors
use the term \emph{full subcomplex} for our induced subcomplexes.  Not
all subcomplexes are induced (e.g. the boundary of the triangle is not
an induced subcomplex of the triangle, but all edges are induced
subcomplexes).  A cell complex is \emph{locally finite} if
$\P_{\ge p}$ is a finite poset for every $\minel \neq p\in \P$.

\begin{definition}\label{d:Link}
Let $\P$ be an abstract convex cell complex and $F\in\P$ a face.  The
\emph{link $\lk_\P F$ of $F$ in $\P$} is the abstract convex cell
complex $\P_{\ge F}$.
\end{definition}

\begin{remark}
If $F$ is a vertex,
Definition~\ref{d:Link} yields what is commonly known as the spherical
link at the vertex.  Here we prefer a combinatorial definition as we
do not think of our complexes as embedded in a metric space.
\end{remark}

\begin{remark}
If $\P$ is a cubical or a simplicial complex, then every link is a
simplicial complex. If $\P$ is locally finite, then the link is a
finite  complex.
\end{remark}

\begin{example}
The link of each vertex in the 3-dimensional cube is a triangle. The
link of each vertex of the octahedron is a square. 
\end{example}

\subsubsection{Simplicial complexes} It is easy to check that, for
simplicial complexes, all the "usual" definitions agree with the ones
given above.  A simplicial complex $\Delta$ is \emph{flag}, if all the
minimal nonfaces have cardinality two. Equivalently, no induced
subcomplex is the boundary of a simplex.  For any integer $k\ge 3$,
the \emph{$k$-cycle} is the 1-dimensional simplicial complex with
vertex set $\{v_i\}_{i=0,\dots,k-1}$ and edge set
$\{\{v_{i},v_{i+1 (\textup{mod }k)}\}\}_{i=0,\dots,{k-1}}$. The
following property of simplicial complexes is essential to this paper,
as it has interpretations in both commutative algebra and Coxeter
group theory.

\begin{definition}\label{d:k-large}
Let $k\ge 4$ be an integer. A simplicial complex is \emph{$k$-large},
if it is flag and does not have any induced $j$-cycles for $j<k$.
\end{definition}
A cubical or simplicial complex is \emph{locally $k$-large} if all its
vertex links are $k$-large.  In the literature, $5$-largeness is
sometimes referred to as \emph{flag-no-square} or \emph{Siebenmann's
condition}.  We stress here that all $k$-large complexes must be flag,
and that an induced cycle contains no diagonals.

\begin{example}
Let $\D$ be the boundary of the octahedron, i.e.~$\D$ has vertex set
$\{\pm v_i\}_{i=1,2,3}$ and eight 2-dimensional facets:
$\{\pm v_1,\pm v_2, \pm v_3\}$. This complex is flag, because the
minimal nonfaces are $\{+v_i,-v_i\}$, but it is not $5$-large, because
the vertex subset $\{\pm v_1,\pm v_2\}$ induces a 4-cycle. Adding the
edges $\{+v_i,-v_i\}$ for $i=1,2,3$ to $\D$, we obtain a simplicial
complex without induced 4-cycles, but it is not flag.
\end{example}

\begin{definition}\label{d:facecomplex}
Let $\Delta$ be a simplicial complex.  The \emph{face complex}
$\face{\D}$ is the simplicial complex whose vertex set is the set
of nonempty faces of $\Delta$ and where $F_1,\dots,F_s \in \Delta$
form a face of $\face{\D}$ if and only they are all contained in a
single face of~$\Delta$.
\end{definition}
\begin{example}
The face complex of a $d$-simplex is the $(2^{d+1}-2)$-simplex.
\end{example}

\subsection{Coxeter groups}
We use the notation from Davis' book \cite{davis2008geometry}.  A
\emph{Coxeter system} is a pair $(W,S)$ consisting of a finitely
generated group $W$ and a finite set of distinct generators
$S = \{s_{1},\dots,s_{n}\}$, all different from the identity, such
that $W$ is presented as
\[
W = \< s_{1}, \dots, s_{n} : (s_{i}s_{j})^{m_{ij}} = e \>
\]
for $m_{ij} \in \N \cup \{\infty\}$ with $m_{ii} = 1$, and
$m_{ij} \ge 2$ for $i\neq j$.  The case $m_{ij} = \infty$ means no
relation.  If a group has a presentation as above, then it is a
\emph{Coxeter group}, and $S$ is a set of \emph{Coxeter generators}.
The finite Coxeter groups have been classified by
Coxeter~\cite{Coxeter1935}.  The matrix $M = (m_{ij})_{ij}$ is the
\emph{Coxeter matrix} of $(W,S)$.  If $m_{ij}\in\{1,2,\infty\}$,
then the Coxeter group (or Coxeter system) is \emph{right-angled}.
The elements of $S$ are \emph{letters}, and the elements of $W$ are \emph{words}.
 
A \emph{special subgroup} of $W$ is a subgroup $W_T$ generated by a
subset $T\subseteq S$ of the Coxeter generators. In particular, the
trivial subgroup is special.  By
\cite[Theorem~4.1.6]{davis2008geometry}, $(W_T,T)$ is a Coxeter system
for all $T\subset S$.  A subset $T\subseteq S$ is \emph{spherical}
if $W_T$ is finite.  In this case, $W_T$ and the words in it are also
called spherical.  A \emph{spherical coset} is a coset of a spherical
subgroup.  All spherical cosets are finite. Clearly, being spherical
is closed under taking subsets.
  
\begin{definition}\label{d:nerve}
The \emph{nerve $\NN(W,S)$} of a Coxeter system $(W,S)$ is the
simplicial complex consisting of the spherical sets ordered by
inclusion.
\end{definition}

The nerve of a Coxeter system is always a finite simplicial complex,
with the Coxeter generators as vertices. 

\begin{remark}\label{r:flagRight-angledCorrespondence}
There is a one-to-one correspondence between right-angled Coxeter
groups and flag simplicial complexes given as follows.  Every flag
simplicial complex $\D$ is the nerve of a right-angled Coxeter group
$\Ww(\D)$: the off-diagonal entries of the Coxeter matrix of $\Ww(\D)$
are $m_{ij}=2$ whenever $\{i,j\} \in \D$ and $m_{ij}=\infty$
otherwise.  Conversely, if $(W,S)$ is right-angled, and
$T\subseteq S$, such that any two elements are connected by an edge in
the nerve, then $W_T\cong (\Z/2\Z)^{|T|}$.
\end{remark}

\begin{remark}
In a right-angled Coxeter group a word is spherical if and only if it
can be written with letters that commute pairwise. In particular, if
the presentation is reduced (i.e.~no subword is equal to the word),
then each letter appears at most once.
\end{remark}
  
\begin{example}\label{e:rocket}
Not every simplicial complex is the nerve of a Coxeter system.  The
smallest counterexample occurs on five vertices and is given by the
complex with facets $\{123, 145, 245,\allowbreak 345\}$.  This can be
confirmed using the classification of finite Coxeter groups.
\end{example}

\begin{definition}\label{d:DavisComplex}
The \emph{Davis complex} of a Coxeter system $(W,S)$ is the cell
complex $\Sigma(W,S)$ given by the poset of spherical cosets.
\end{definition}

\begin{remark}\label{r:LinkNerve}
The link of any vertex $w$ of $\Sigma(W,S)$ is the poset of spherical
cosets $wW_T$ for all spherical subsets~$T$.  It is thus isomorphic
to the nerve $\NN(W,S)$.
\end{remark}
  
\begin{remark}\label{r:DavisJanuszkiewich}
Davis and Januszkiewicz have discovered a link between
Stanley--Reisner theory and Coxeter groups that is different from the
developments in our paper.  The cohomology ring $H^*(W,\FF_2)$ is
isomorphic to the Stanley--Reisner ring $\FF_2[\N(W,S)]$.  However,
this connection is a characteristic two phenomenon, as otherwise the
product in the cohomology ring need not be commutative.
See~\cite[Theorem~4.11]{davis1991convex}.
\end{remark}

\subsection{Geometric group theory}
Let $\Gamma$ be a simple graph on a (possibly infinite) vertex set
$V$.  Given two vertices $v,w\in V$, a \emph{path} $e$ from $v$ to $w$
is a subset $\{v=v_0,v_1,v_2,\ldots ,v_k=w\} \subseteq V$, such that
$\{v_i,v_{i+1}\}$ is an edge for all $i=0,\ldots ,k-1$. The {\it
length} of a path is $ \ell(e)=k $. The \emph{distance} between $v$ and
$w$ is
\[
d(v,w):=\min\{\ell(e):e \mbox{ is a path from $v$ to $w$}\}.
\]
If $W\subset V$ is a set of vertices, then
$d(v,W) := \min \{d(v,w), w\in W\}$.  A path $e$ from $v$ to $w$ is a
\emph{geodesic path} if $\ell(e)=d(v,w)$. A \emph{geodesic triangle} of
vertices $v_1$, $v_2$, and $v_3$ consists of three geodesic paths
$e_i$ from $v_i$ to $v_{i+1 (\textup{mod }3)}$ for $i=1,2,3$.  For a
real number $\delta \geq 0$, a geodesic triangle $e_1,e_2,e_3$ is {\it
$\delta$-slim} if $d(v,e_i\cup e_j) \leq \delta$ for all $v\in e_k$
and $\{i,j,k\}=\{1,2,3\}$. The graph $\Gamma$ is {\it
$\delta$-hyperbolic} if each geodesic triangle of $\Gamma$ is
$\delta$-slim, and \emph{hyperbolic} if it is $\delta$-hyperbolic for
some $\delta \geq 0$.

\subsubsection{Hyperbolic groups}
\label{s:hyperbolicGroups}
Let $G$ be a group and $S$ a set of distinct generators of $G$, not
containing the identity.  The \emph{Cayley graph} $\Cay(G,S)$ is the
simple graph with vertex set $G$ and edges $\{g,gs\}$ for all $g\in G$
and $s\in S$.  For example, the vertices of the Davis complex
$\Sigma(W,S)$ of a Coxeter system $(W,S)$ are the elements of~$W$ and
the edges are the cosets of the spherical subgroups~$W_{s_i}$.
Therefore the 1-skeleton of $\Sigma(W,S)$ is the Cayley graph
of~$(W,S)$.  Gromov proved that if $\Cay(G,S)$ is hyperbolic for
some finite set of generators $S$ then it is hyperbolic for any
finite set of generators~$S$
\cite[Theorem~12.3.5]{davis2008geometry}.

This justifies the definition of hyperbolic groups in the following
way.
\begin{definition}\label{d:hyperbolicGroup}
A group $G$ is \emph{hyperbolic} if $\Cay(G,S)$ is a hyperbolic graph
for some (equivalently for any) finite set of generators $S$.
\end{definition}

It is easy to check that $\Z^2$ is not hyperbolic. Therefore, if $G$
contains $\Z^2$ as a subgroup, then $G$ cannot be hyperbolic. By work
of Moussong, for a Coxeter group $(W,S)$ this can be reversed:
\[\mbox{ $W$ is hyperbolic $\iff$ $\Z^2\not\subset W$}.\] 
Combining results of Siebenmann~\cite[Lemma~I.6.5]{davis2008geometry}
and Moussong~\cite[Lemma~12.6.2]{davis2008geometry}, if $(W,S)$ is
right-angled then
\begin{equation}\label{hyper=linsyz}
\mbox{ $W$ is hyperbolic $\iff$ $\NN(W,S)$ has no induced 4-cycles}.
\end{equation}

\subsubsection{Cohomological dimension}

The {\it cohomological dimension} of a group $G$ is
\[
\cd G=\sup\{n:H^n(G;M)\neq 0 \ \mbox{ for some $\Z G$-module }M\},
\]
where $H^n(G;M)$ is the $n$-th group cohomology of $G$ with values
in~$M$ (see \cite[Appendix~F]{davis2008geometry} for equivalent
definitions and some properties).  If $G$ has nontrivial torsion, then
$\cd G=+\infty$ (see \cite[Lemma~F.3.1]{davis2008geometry}).
Therefore the notion is not interesting for groups with torsion, but
this can be rectified.
A group $G$ is {\it virtually torsion-free} if it has a finite index
subgroup which is torsion-free.  It follows from a result of
Serre~\cite[Theorem~F.3.4]{davis2008geometry} that, if $\Gamma$ and
$\Gamma'$ are two finite index torsion-free subgroups of~$G$, then
$\cd \Gamma=\cd \Gamma'$. Thus the following notion is well-defined.
\begin{definition}
Let $G$ be a virtually torsion-free group, and $\Gamma$ some
(equivalently any) finite index torsion-free subgroup of~$G$.  The
{\it virtual cohomological dimension} of $G$ is
\[
\vcd G=\cd \Gamma. 
\]
\end{definition}
Each nontrivial Coxeter group has torsion but, admitting a faithful
linear representation (see \cite[Corollary~D.1.2]{davis2008geometry}),
it is virtually torsion-free.  Thus the virtual cohomological
dimension is always well-defined for a Coxeter group. By
\cite[Corollary~8.5.5]{davis2008geometry}, and using
\cite[Lemma~70.1]{munkres1984elements} to avoid geometric
realizations, the $\vcd$ of a Coxeter group $(W,S)$ can be read off
the nerve $\NN(W,S)$, namely:
\begin{equation} \label{eq:vcd} \vcd
W=\max\{i:\widetilde{H}^{i-1}(\NN(W,S)\setminus \sigma;\Z)\neq 0
{\mbox{ for some } \sigma\in \NN(W,S)}\},
\end{equation}
where $\NN(W,S)\setminus \sigma$ is the restriction of $\NN(W,S)$ to
$S\setminus\sigma$, and $\widetilde{H}^{i}$ denotes the reduced
simplicial cohomology modules.

\subsection{Commutative Algebra}
\label{s:prelim_CA}
Let $n$ be a positive integer, $R=\kk[x_1,\ldots ,x_n]$ the polynomial
ring in $n$ variables over a field $\kk$, and $\mm=(x_1,\ldots ,x_n)$
its irrelevant ideal.  Any quotient $R/I$ by some homogeneous ideal
$I\subseteq R$ has a minimal graded free resolution.
\[
0\rightarrow \bigoplus_{j\in\Z}R(-j)^{\beta_{kj}}\rightarrow \cdots
\rightarrow \bigoplus_{j\in\Z}R(-j)^{\beta_{2j}}\rightarrow
\bigoplus_{j\in\Z}R(-j)^{\beta_{1j}}\rightarrow R\rightarrow R/I\rightarrow 0.
\]
The \emph{Betti number} $\beta_{ij}$ is the number of minimal
generators of degree $j$ of the free module in homological degree~$i$
in the resolution.  It is independent of the particular minimal
resolution and can be computed as
$\beta_{i,j}(R/I) = \dim_\kk\Tor_i(R/I,\kk)_j$.
\begin{definition}
The \emph{Castelnuovo--Mumford regularity} of $R/I$ is
\[ \reg(R/I)=\max\{j-i:\beta_{i,j}(R/I)\neq0\}\]
\end{definition}
If $H^i_\mm$ denotes local cohomology with support in
$\mm$,~\cite[Proposition 20.16]{eisenbud95:_commut_algeb} and
Grothendieck duality imply
$\reg(R/I)=\max\{j+i:H_{\mm}^i(R/I)_j\neq 0\}$,
\begin{definition}
For any positive integer $p$, the $\kk$-algebra $R/I$ \emph{satisfies
property~$N_p$} if:
\[
\beta_{i,j}(R/I)=0 \quad \forall \ i=1,\ldots ,p \text{ and } j\neq i+1.
\]
\end{definition}
\begin{definition}\label{d:Stanley-Reisner ring}
Let $\D$ be a finite simplicial complex with vertex
set~$[n]=\{1,\dots,n\}$.  The \emph{Stanley--Reisner ring} of $\D$,
denoted by $\kk[\D]$, is the quotient of $R$ by the square-free
monomial ideal
\[
I_\D = (\prod_{i\in A}x_i : A\subseteq [n] \textup{ and } A\notin \D).
\]
The ideal $I_\D$ is the \emph{Stanley--Reisner ideal} of $\D$.
\end{definition}
There is a one-to-one correspondence between simplicial complexes and
ideals generated by square-free monomials.  From the definition it
follows that a simplicial complex is flag if and only if its
Stanley--Reisner ideal is quadratic.  The $N_p$ property for
Stanley--Reisner rings was characterized combinatorially
in~\cite[Theorem~2.1]{eisenbud2005restricting}:
\begin{theorem}
The Stanley--Reisner ring $\kk[\D]$ satisfies  $N_p$ if and
only if $\D$ is $(p+3)$-large.
\end{theorem}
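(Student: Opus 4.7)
The strategy is to apply Hochster's formula
\[
\beta_{i,j}(R/I_\D) = \sum_{W \subseteq [n],\, |W|=j} \dim_\kk \widetilde{H}_{j-i-1}(\D|_W; \kk),
\]
which rephrases $N_p$ as the vanishing condition: $\widetilde{H}_k(\D|_W;\kk) = 0$ for every $W \subseteq [n]$ and every integer $k$ with $\max(1, |W|-p-1) \le k \le |W|-2$.

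For the implication $N_p \Rightarrow (p+3)$-large, I argue contrapositively. If $\D$ is not flag, pick a minimal non-face $W$ of size $s \ge 3$: then $\D|_W = \partial\Delta^{s-1}$ has $\widetilde{H}_{s-2}(\D|_W) = \kk$, so $\beta_{1,s} \neq 0$ and even $N_1$ fails. If $\D$ contains an induced $k$-cycle with $4 \le k \le p+2$, setting $W$ equal to its vertex set yields $\D|_W$ equal to the $k$-cycle complex with $\widetilde{H}_1(\D|_W) = \kk$; since $|W| = k \le p+2$, this violates the Hochster vanishing at $(i,j) = (k-2, k)$ with $i \le p$.

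For the converse I fix $W$ and prove $\widetilde{H}_k(\D|_W;\kk) = 0$ for all $k \ge \max(1, |W|-p-1)$ by induction on $|W|$, using that $\D|_W$ inherits $(p+3)$-largeness. In the base case $|W| \le p+2$, any induced cycle of $\D|_W$ would have length in $[4,|W|] \subseteq [4,p+2]$, which is forbidden, while $3$-cycles are filled by flagness; so the $1$-skeleton of $\D|_W$ is chordal, whence $\D|_W$ is the clique complex of a chordal graph. By Fr\"oberg's classical observation, such a complex is collapsible, hence contractible, and all its positive reduced homology vanishes. In the inductive step $|W| \ge p+3$, so $k \ge 2$; pick $v \in W$ and write $N(v)$ for its neighborhood in the $1$-skeleton. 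The decomposition $\D|_W = \mathrm{star}_{\D|_W}(v) \cup \D|_{W \setminus v}$ has intersection $\lk_{\D|_W}(v) = \D|_{N(v)\cap W}$ (using flagness), the star is contractible, and Mayer--Vietoris gives exactness of
\[
\widetilde{H}_k(\D|_{W\setminus v}) \to \widetilde{H}_k(\D|_W) \to \widetilde{H}_{k-1}(\D|_{N(v)\cap W}).
\]
Both $|W\setminus v|$ and $|N(v)\cap W|$ are at most $|W|-1$, and a direct check places $k$, respectively $k-1$, in the admissible range for these smaller subcomplexes, so the induction hypothesis kills the outer terms.

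The main obstacle is the base case: extracting a topological vanishing statement (contractibility of $\D|_W$ for $|W|\le p+2$) from the purely combinatorial hypothesis of no short induced cycles, via Fr\"oberg's theorem on chordal clique complexes. Once this is in hand, the Mayer--Vietoris induction becomes a routine bookkeeping exercise.
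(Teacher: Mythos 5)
The paper does not prove this statement at all: it is quoted from Eisenbud--Green--Hulek--Popescu \cite[Theorem~2.1]{eisenbud2005restricting}, so there is no internal argument to compare yours against. Judged on its own, your proof is correct and is essentially the standard argument (a finite-$p$ refinement of Fr\"oberg's proof, close in spirit to the original source): the Hochster translation of $N_p$ into the vanishing of $\widetilde{H}_k(\D|_W;\kk)$ for $\max(1,|W|-p-1)\le k\le |W|-2$ is accurate under the paper's convention that every element of $[n]$ is a vertex of $\D$ (which rules out linear generators, i.e.\ the $k=-1$ and diagonal strands); the forward direction via minimal nonfaces ($\D|_W=\partial\Delta^{s-1}$) and induced cycles of length $4,\dots,p+2$ is exactly right; and in the converse the star/deletion/link decomposition, the identification $\lk_{\D|_W}v=\D|_{N(v)\cap W}$ via flagness, and the range bookkeeping in the Mayer--Vietoris sequence all check out (note $k\ge 2$ in the inductive step, so the possibly empty link causes no trouble in reduced homology).

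One small inaccuracy to repair in the base case: the clique complex of a chordal graph need not be collapsible or contractible, since a chordal graph may be disconnected (two isolated vertices already give $\widetilde{H}_0\neq 0$). What is true, and all you use, is that each connected component is contractible (collapse along a simplicial vertex given by a perfect elimination ordering, or simply invoke Fr\"oberg's theorem that the Stanley--Reisner ideal of the clique complex of a chordal graph has a $2$-linear resolution, hence regularity $1$); either way $\widetilde{H}_k(\D|_W;\kk)=0$ for all $k\ge 1$ when $|W|\le p+2$, which is the statement your induction needs. With that wording fixed, the proof is complete.
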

The Castelnuovo--Mumford regularity of $\kk[\D]$ can be computed from
the reduced singular cohomology of either induced subcomplexes or links of~$\D$.
More precisely, Hochster's formula for graded Betti
numbers~\cite[Corollary~5.12]{miller05:_combin_commut_algeb} gives
\begin{equation}\label{eq:reg1}
\reg \kk[\D]=\max\{i:\widetilde{H}^{i-1}(\D|_A;\kk)\neq 0 \mbox{ for some }A\subseteq [n] \}.
\end{equation}
On the other hand, by Hochster's formula for local
cohomology~\cite[Theorem~13.13]{miller05:_combin_commut_algeb}:
\begin{equation}\label{eq:reg2}
\reg \kk[\D]=\max\{i:\widetilde{H}^{i-1}(\lk_{\D}\sigma;\kk)\neq 0 \mbox{ for some } \sigma \in \D\}.
\end{equation}

\section{Homological remarks on Stanley--Reisner rings}
\label{s:SRHomRemarks}

In this section, $\D$ is a $d$-dimensional simplicial complex on $n$
vertices, and a face of $\D$ is identified with its set of vertices.
We use some standard algebraic topology (see for example \cite[\para
5]{munkres1984elements}).  For $r\leq d$ let $C_r(\D;\kk)$ be the
$\kk$-vector space spanned by the $r$-dimensional faces of~$\D$.  Let
$\partial_r : C_r(\D;\kk) \to C_{r-1}(\D;\kk)$ be the boundary
operator and $Z_r(\D;\kk) = \Ker \partial_r$ the subspace spanned by the cycles
in~$C_r(\D;\kk)$.  Also write $B_r(\D;\kk) = \ima \partial_{r+1}$ for
the subspace spanned by the boundaries in~$C_r(\D;\kk)$.  An $r$-cycle
$C$ is {\it nontrivial} if $C\notin B_r(\D;\kk)$.  A nontrivial
$r$-cycle $C$ is {\it vertex-minimal} if there is no nontrivial
$r$-cycle $C'$ with $V(C')\subsetneq V(C)$, where for
$C=\sum_{i=1}^lc_iF_i\in C_r(\D;\kk)$ we set
$V(C)=\cup_{i=1}^lF_i$.  For every $v\in V(C)$ define
\[
C_v = \sum_{F_i\ni v}c_i(F_i\setminus v) \in C_{r-1}(\lk_{\D}v;\kk).
\]

If $C\in Z_r(\D;\kk)$, then all codimension one faces containing $v$
sum to zero when applying~$\partial$, so
$\partial(\sum_{F_i\ni v}c_iF_i) = C_v$; therefore
$\partial(C_v) = \partial^2(\sum_{F_i\ni v}c_iF_i) = 0$ in
$C_{r-2}(\D;\kk)$ and thus, since the differentials of
$(C_i(\lk_{\D}v;\kk))_i$ are just the restrictions of the
differentials of $(C_i(\D;\kk))_i$, we get
$C_v\in Z_{r-1}(\lk_{\D}v;\kk).$
\begin{lemma}\label{l:cycles}
Let $C=\sum_{i=1}^lc_iF_i$ be a nontrivial $r$-cycle in $\D$ and
$v\in V(C)$ a vertex.
\begin{enumerate}[label=\roman*),ref=\roman*)]
\item\label{it:cycleLemma1} If $r=d$, then $C_v$ is a nontrivial $(d-1)$-cycle in $\lk_{\D}v$.
\item\label{it:cycleLemma2} If $C$ is vertex-minimal, then $C_v$ is a nontrivial $(r-1)$-cycle in $(\lk_{\D}v)|_{V(C)}$.
\end{enumerate} 
\end{lemma}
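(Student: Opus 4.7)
For (i), the plan is short. The observation is that $r=d$ is the top dimension of $\D$, so the link $\lk_\D v$ has dimension at most $d-1$ and consequently $C_d(\lk_\D v;\kk)=0$ and $B_{d-1}(\lk_\D v;\kk)=0$; thus every nonzero $(d-1)$-cycle in the link is automatically nontrivial. It then remains only to check that $C_v\neq 0$, which follows because $v\in V(C)$ forces at least one $F_i\ni v$ to have $c_i\neq 0$, and the $(d-1)$-faces $F_i\setminus v$ obtained from distinct $F_i$ containing $v$ are themselves distinct, precluding any cancellation in the sum $C_v=\sum_{F_i\ni v}c_i(F_i\setminus v)$.

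For (ii), I would argue by contradiction and use a cone construction to produce a nontrivial cycle on strictly fewer vertices than $C$. Assume $C_v=\partial D$ for some $D\in C_r((\lk_\D v)|_{V(C)\setminus\{v\}};\kk)$. Writing $D=\sum_j d_j G_j$, set $v\ast D:=\sum_j d_j(\{v\}\cup G_j)\in C_{r+1}(\D;\kk)$; this is a well-defined chain in $\D$ because each $G_j$ is a face of $\lk_\D v$. The standard cone identity $\partial(v\ast D)=D-v\ast\partial D$ becomes, after substituting $\partial D=C_v$, the equation $\partial(v\ast D)=D-v\ast C_v$. On the other hand, splitting $C=C^v+C^{\bar v}$ according to whether the face contains $v$, the definition of $C_v$ immediately gives $v\ast C_v=C^v$. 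Combining yields $C^v=D-\partial(v\ast D)$, so that
\[
C' \,:=\, C+\partial(v\ast D) \,=\, D+C^{\bar v}
\]
is a cycle homologous to $C$, hence still nontrivial. But every face appearing in $C'$ is either a face of $D$ (with vertices in $V(C)\setminus\{v\}$ by the choice of $D$) or a face of $C$ not containing $v$, so $V(C')\subseteq V(C)\setminus\{v\}\subsetneq V(C)$, contradicting the vertex-minimality of $C$.

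The main obstacle in this plan is purely bookkeeping: keeping the signs in the cone identity straight and verifying the identity $v\ast C_v=C^v$ directly from the definitions. The conceptual content is simply that a bounding chain for $C_v$ which is supported on $V(C)\setminus\{v\}$ allows one to delete the vertex $v$ from $C$ up to boundaries, and that this is precisely why the restriction to $(\lk_\D v)|_{V(C)}$ in the statement is the natural one.
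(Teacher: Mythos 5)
Your proof is correct and follows essentially the same route as the paper: part (i) is the expected elaboration of what the paper calls clear (no top-dimensional faces in the link, hence no boundaries, plus $C_v\neq 0$), and part (ii) is exactly the paper's argument, coning a bounding chain of $C_v$ with $v$ and adding its boundary to $C$ to produce a nontrivial cycle avoiding $v$, contradicting vertex-minimality. The sign bookkeeping you defer is handled in the paper by assuming $v$ comes first in the vertex order, which makes $v\ast C_v=C^v$ hold on the nose.
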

\begin{proof}
\ref{it:cycleLemma1} is clear.  For \ref{it:cycleLemma2}, it is
harmless to assume that, in the linear order given to the vertices
of~$\D$, $v$ comes first in~$V(C)$.  Assume there exists
$B_v=b_1{G_1}+\ldots +b_s{G_s}$ with $b_i\in\kk$, such that the
${G_i}$ are $r$-dimensional faces in $(\lk_{\Delta} v)|_{V(C)}$ and
$\partial(B_v)=C_v$.  Consider the $\kk$-linear combination of
$(r+1)$-faces in $\Delta$ defined as
$B=b_1(G_1\cup v)+\ldots +b_s(G_s\cup v)$. Then
\[\partial(B)= -\sum_{F_i\ni v}c_iF_i+B_v.\]
So $A=C+\partial(B)$ is a nontrivial $r$-cycle of $\Delta$ (otherwise
$C$ would be trivial). However, $v\notin V(A)\subset V(C)$ -- a
contradiction.
\end{proof}

\begin{proposition}\label{p:reglink}
There exists a vertex $v\in\D$ such that
\[\reg \kk[\lk_{\D}v]\geq \reg \kk[\D]-1.\]
\end{proposition}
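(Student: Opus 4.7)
The plan is to combine Hochster's formula~\eqref{eq:reg1} with Lemma~\ref{l:cycles}(ii), applied not to $\Delta$ itself but to a carefully chosen induced subcomplex. Set $r = \reg \kk[\Delta]$. First I use~\eqref{eq:reg1} to pick a vertex set $A \subseteq [n]$ of \emph{minimum cardinality} such that $\widetilde{H}^{r-1}(\Delta|_A;\kk) \neq 0$. Since we work over a field, cohomology and homology have matching dimensions, so $\widetilde{H}_{r-1}(\Delta|_A;\kk) \neq 0$ and there exists a nontrivial $(r-1)$-cycle $C \in Z_{r-1}(\Delta|_A;\kk)$.

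Next I upgrade $C$ to a vertex-minimal cycle so that Lemma~\ref{l:cycles}(ii) can be invoked. By the minimality of $|A|$, one must have $V(C) = A$: otherwise $C$ would witness $\widetilde{H}_{r-1}(\Delta|_{V(C)};\kk) \neq 0$ for a strictly smaller vertex set, contradicting the choice of $A$. The same minimality argument, applied inside $\Delta|_A$, shows that no proper subset of $V(C)$ supports a nontrivial $(r-1)$-cycle of $\Delta|_A$, so $C$ is vertex-minimal in the sense of Lemma~\ref{l:cycles} \emph{as a cycle in the complex $\Delta|_A$}.

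Now I apply Lemma~\ref{l:cycles}(ii) to $\Delta|_A$ (rather than to $\Delta$): for any vertex $v \in V(C)$, the chain $C_v$ is a nontrivial $(r-2)$-cycle in $(\lk_{\Delta|_A} v)|_{V(C)}$, and because $V(C) \subseteq A$ this complex coincides with $(\lk_\Delta v)|_{V(C)}$. Dualizing over $\kk$ gives $\widetilde{H}^{r-2}((\lk_\Delta v)|_{V(C)};\kk) \neq 0$, and applying~\eqref{eq:reg1} to $\lk_\Delta v$ yields $\reg \kk[\lk_\Delta v] \geq (r-2)+1 = r-1$, as required.

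The main obstacle is the one addressed in the second paragraph: the cycle $C$ is a priori only nontrivial in $\Delta|_A$, and may well become a boundary in the ambient complex $\Delta$, so Lemma~\ref{l:cycles}(ii) cannot be applied to $C$ viewed in $\Delta$. Working inside $\Delta|_A$ and exploiting the minimum-cardinality choice of $A$ is what produces the vertex-minimality hypothesis needed by the lemma, and it is precisely this step that lets one transfer the cohomological witness of $\reg \kk[\Delta]$ down to a witness of regularity $r-1$ in the link of some vertex.
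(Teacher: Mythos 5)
Your proof is correct and follows essentially the same route as the paper: choose an induced subcomplex witnessing the regularity via Hochster's formula~\eqref{eq:reg1}, take a vertex-minimal nontrivial cycle inside it, apply Lemma~\ref{l:cycles}(ii) there, and use $(\lk_{\D|_A} v)|_{V(C)}=(\lk_{\D} v)|_{V(C)}$. The only (harmless) difference is that you secure vertex-minimality by choosing $A$ of minimum cardinality, whereas the paper simply picks a vertex-minimal nontrivial cycle in the chosen subcomplex directly.
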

\begin{proof}
Let $V'$ be a subset of the vertex set of $\D$ such that
$\Gamma=\D|_{V'}$ has nontrivial $r$th homology with coefficients
in~$\kk$, where $\reg \kk[\D]=r+1$.  Let $C$ be a vertex-minimal
nontrivial $r$-cycle of~$\Gamma$.  By Lemma~\ref{l:cycles}
\ref{it:cycleLemma2}, $C_v$ is a nontrivial $(r-1)$-cycle in
$(\lk_{\Gamma} v)|_{V(C)}$, for all $v\in V(C)$.  Since
$(\lk_{\Gamma} v)|_{V(C)}=(\lk_{\D} v)|_{V(C)}$, the proposition
follows.
\end{proof}

\begin{proposition}\label{p:dubois}
If $I\subseteq R$ is a homogeneous (not necessarily monomial) ideal
such that $\sqrt{I}$ is a square-free monomial ideal, then for any
$i\in\N, j\in \Z$ the map of $\kk$-vector spaces
\[H^i_{\mm}(R/I)_j\to H^i_{\mm}(R/\sqrt{I})_j\]
is surjective. In particular,
$\reg R/\sqrt{I}\leq \reg R/I \mbox{ \ and \ }\pdim R/\sqrt{I}\leq \pdim R/I.$
\end{proposition}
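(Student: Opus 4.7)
The inequalities for $\reg$ and $\pdim$ follow immediately from the surjectivity. Combining surjectivity with the formula $\reg(R/I)=\max\{i+j\colon H^i_\mm(R/I)_j\neq 0\}$ from the preliminaries shows that whenever $H^i_\mm(R/\sqrt I)_j\neq 0$ the same bidegree is already witnessed in $H^i_\mm(R/I)$, hence $\reg R/\sqrt I\leq \reg R/I$. The $\pdim$ bound then follows from Auslander--Buchsbaum together with the inequality $\operatorname{depth} R/I\leq \operatorname{depth} R/\sqrt I$, itself an immediate consequence of the surjectivity (vanishing of the lowest nonzero $H^i_\mm(R/I)$ forces vanishing of the corresponding $H^i_\mm(R/\sqrt I)$). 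So the real content is the surjectivity claim.

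My plan is to exploit the fact that $R/\sqrt I$, being a Stanley--Reisner ring, has particularly favourable singularities: it is $F$-pure in every positive characteristic and Du Bois in characteristic zero. Treat positive characteristic $p$ first: set $J=\sqrt I$ and pick $N$ with $J^N\subseteq I$. For any Frobenius power $q=p^e\geq N$ the inclusion $J^{[q]}\subseteq J^q\subseteq I$ together with the characteristic-$p$ identity $r^q-s^q=(r-s)^q$ ensures that the $q$-th power map descends to a well-defined $R$-linear homomorphism
\[
\phi\colon R/J\longrightarrow R/I,\qquad \bar r\longmapsto \overline{r^q}.
\]
The composition $\pi\circ\phi\colon R/J\to R/J$ is the $e$-th iterated Frobenius $F^e$, which splits as an $R$-module map by $F$-purity of $R/J$ (and the splitting can be chosen graded because the monomial splitting of Stanley--Reisner Frobenius is homogeneous). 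The plan is then to transport this splitting through graded local duality ($H^i_\mm(-)_j$ being, up to shift, the $\kk$-dual of $\operatorname{Ext}^{n-i}(-, R)_{-j-n}$) to deduce surjectivity of $\pi_*$ on each graded piece.

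Characteristic zero is handled by a standard spreading-out argument: one chooses a finite-type $\ZZ$-subalgebra $A\subset\kk$ over which $I$ is defined and over which the relevant graded pieces are $A$-flat, and transports the local cohomology statement bidegree-by-bidegree to the residue fields at closed points of $\operatorname{Spec} A$ of sufficiently large characteristic, where the previous step applies.

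The main obstacle is the duality chase in the second paragraph: a na\"ive application of $F$-purity only yields injectivity of $\phi_*$ on local cohomology, rather than the desired surjectivity of $\pi_*$. Extracting the correct conclusion requires a careful distinction between the two maps $\phi$ and $\pi$ under local duality, keeping track of the graded shifts (Frobenius multiplies each $\ZZ$-degree by $q$), and ensuring that the Frobenius splitting of $R/J$ is compatible with the graded Matlis dual. That compatibility is precisely where the Stanley--Reisner hypothesis on $\sqrt I$ is used, as it forces a homogeneous splitting defined by the monomial decomposition of $R$.
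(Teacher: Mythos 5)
Your reductions of the ``in particular'' statements to the surjectivity claim are fine, and your overall strategy (F-purity of the Stanley--Reisner quotient in positive characteristic, then spreading out to handle characteristic zero) is in the spirit of the paper, which however simply quotes the key surjectivity: it notes that $(R/\sqrt{I})_\mm$ is $F$-pure in characteristic $p$ and Du Bois in characteristic $0$ (by Schwede's theorem), and then invokes \cite[Lemma 3.3, Remark 3.4]{MSS16} for the surjectivity of $H^i_{\mm}(R/I)\to H^i_{\mm}(R/\sqrt{I})$, concluding by homogeneity of the map.

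The genuine gap in your proposal is exactly at the step you flag as ``the main obstacle,'' and your proposed fix does not close it. Writing $J=\sqrt{I}$ and $q=p^e\gg 0$ so that $J^{[q]}\subseteq I$, your factorization $F^e=\pi\circ\phi$ with $\phi\colon R/J\to R/I$, $\bar r\mapsto \overline{r^{\,q}}$ (note: $\phi$ is $p^e$-linear, not $R$-linear as you claim) together with a Frobenius splitting of $R/J$ yields, after applying $H^i_\mm$ or its graded Matlis/Grothendieck dual, only that $\phi_*$ is split injective --- equivalently, that the dual of $\phi$ is surjective. No amount of bookkeeping of graded shifts or compatibility of the monomial splitting with duality converts this into surjectivity of $\pi_*$: the splitting $\rho\colon F^e_*(R/J)\to R/J$ has no reason to preserve $I$, so the identity $\mathrm{id}=\rho_*\circ(F^e_*\pi_*)\circ\phi_*$ does not force $\operatorname{im}\pi_*$ to be everything. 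What the known proofs actually use is a strictly stronger input, namely Ma's theorem that $F$-purity implies the Frobenius action on each $H^i_\mm(R/J)$ is \emph{anti-nilpotent}; then one observes that $\operatorname{im}\pi_*$ is an $F$-stable submodule and that some power of Frobenius annihilates $\operatorname{coker}\pi_*$ (because $J^{[q]}\subseteq I$ makes $F^e$ on $H^i_\mm(R/J)$ factor through $H^i_\mm(R/I)$), whence the cokernel vanishes. Unless you import this anti-nilpotency result (or the cited lemma of Ma--Schwede--Shimomoto), your characteristic-$p$ argument does not prove the surjectivity; and since your characteristic-zero case is reduced to it by spreading out (itself requiring the standard but nontrivial generic base-change bookkeeping, e.g.\ via $\operatorname{Ext}$ and graded local duality, plus compatibility of $\sqrt{I}$ with reduction mod $p$), the proof as a whole is incomplete at its core.
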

\begin{proof}
Let $A$ denote $R/I$ localized at~$\mm$.  If $\chara \kk>0$, then the
quotient by the nilradical $A_{\mbox{{\tiny red}}} = A/\sqrt{(0)}$ is
$F$-pure. By \cite[Theorem~6.1]{Schwede09}, in characteristic zero,
$A_{\mbox{{\tiny red}}}$ is DuBois.

So in each case, by \cite[Lemma 3.3, Remark 3.4]{MSS16} the map
\[
H^i_{\mm}(R/I)=H^i_{\mm A}(A)\to H^i_{\mm A}(A_{\mbox{{\tiny red}}}) =
H^i_{\mm}(R/\sqrt{I})
\]
is surjective for any $i\in\N$. Since the above map is homogeneous we
conclude.
\end{proof}

Given two polynomial rings $R=\kk[x_1,\ldots ,x_n]$ and
$R'=\kk[y_1,\ldots ,y_m]$, a map of $\kk$-algebras $f:R\to R'$ is a
{\it monomial map} if $f(x_i)$ is a monomial in $\{y_1,\ldots ,y_m\}$
for all $i=1,\ldots ,n$.

\begin{lemma}\label{l:monmaps}
If $f:R\to R'$ is a monomial map and $I\subseteq R$ is a monomial
ideal, then
\[
\pdim R'/\sqrt{f(I)R'} \leq \pdim R/I.
\]
\end{lemma}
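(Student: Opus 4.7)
The plan is to factor $f$ through a larger polynomial ring in which the map becomes a quotient by a regular sequence, and then apply the change-of-rings formula for projective dimension. Set $T = \kk[x_1,\ldots,x_n,y_1,\ldots,y_m] \cong R \otimes_{\kk} R'$, and let $\pi \colon T \twoheadrightarrow R'$ be the monomial surjection given by $\pi(x_i) = f(x_i)$ and $\pi(y_j) = y_j$. Since $\pi$ restricts to the identity on $R'$, its kernel is $K = (x_1 - f(x_1),\ldots,x_n - f(x_n))$, an ideal generated by $n$ elements.

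The inclusion $R \hookrightarrow T$ being flat, $\pdim_T T/IT = \pdim_R R/I$. Moreover, $T/K \cong R'$ has dimension $m = \dim T - n$, so by Cohen--Macaulayness of $T$ the generators $x_i - f(x_i)$ form a $T$-regular sequence. Two consequences follow: (i) the Koszul resolution provides the standard change-of-rings identity $\pdim_T N = \pdim_{R'} N + n$ for every $R'$-module $N$; and (ii) since $\pi(IT) = f(I)R'$, we have $R'/f(I)R' \cong T/(IT+K)$ as $T$-modules.

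The remaining ingredient is the well-known bound $\pdim_T(M/JM) \leq \pdim_T M + \mu(J)$, where $\mu(J)$ denotes the minimum number of generators of $J$; via the Auslander--Buchsbaum formula this is equivalent to the depth inequality $\operatorname{depth}(M/JM) \geq \operatorname{depth}(M) - \mu(J)$, a classical fact proved by induction on $\mu(J)$. Applying it with $M = T/IT$ and $J = K$, and combining with~(i) and~(ii), gives
\[
\pdim_{R'} R'/f(I)R' + n \;=\; \pdim_T T/(IT+K) \;\leq\; \pdim_T T/IT + n \;=\; \pdim_R R/I + n,
\]
so $\pdim_{R'} R'/f(I)R' \leq \pdim_R R/I$. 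The lemma now follows by chaining with Proposition~\ref{p:dubois}:
\[
\pdim_{R'} R'/\sqrt{f(I)R'} \;\leq\; \pdim_{R'} R'/f(I)R' \;\leq\; \pdim_R R/I.
\]

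The main subtleties are checking that $x_1 - f(x_1),\ldots,x_n - f(x_n)$ really form a $T$-regular sequence (settled by the codimension count together with Cohen--Macaulayness of $T$, or alternatively by a lex Gr\"obner argument taking initial terms $x_i$) and invoking the depth inequality for a quotient by an ideal with $n$ generators; both ingredients are standard commutative-algebra facts.
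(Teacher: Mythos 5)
Your setup of $T=\kk[x_1,\dots,x_n,y_1,\dots,y_m]$, the kernel $K=(x_i-f(x_i))$, and the identifications (i) and (ii) are fine, but the step you call a ``well-known bound'' is not one: the inequality $\pdim_T(M/JM)\le \pdim_T M+\mu(J)$, equivalently $\operatorname{depth}(M/JM)\ge \operatorname{depth}M-\mu(J)$, is a classical fact only when the generators of $J$ form an \emph{$M$-regular} sequence. Here the elements $x_i-f(x_i)$ are a $T$-regular sequence but are in general zerodivisors on $M=T/IT$ (they must be whenever $f$ identifies variables or sends them to products), and then the depth can drop by more than $\mu(J)$; the induction on $\mu(J)$ you gesture at already fails at the level of a single zerodivisor. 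Worse, the intermediate inequality you extract, $\pdim_{R'}R'/f(I)R'\le \pdim_R R/I$, is simply false. Take $I=(x_1x_2,x_1x_3,x_2x_3)\subseteq R=\kk[x_1,x_2,x_3]$, so that $\pdim_R R/I=2$, and the monomial map $f:R\to R'=\kk[y_1,y_2,y_3]$ with $f(x_1)=y_2y_3$, $f(x_2)=y_1y_3$, $f(x_3)=y_1y_2$. Then $f(I)R'=y_1y_2y_3\,(y_1,y_2,y_3)$, and the class of $y_1y_2y_3$ is a nonzero socle element of $R'/f(I)R'$, so $\operatorname{depth}R'/f(I)R'=0$ and $\pdim_{R'}R'/f(I)R'=3>2$. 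Translated into your notation via your own item (i), $\pdim_T T/(IT+K)=6$ while $\pdim_T T/IT+n=2+3=5$, so the bound you need fails in the very instance where you apply it. Note the example is consistent with the lemma, since $\sqrt{f(I)R'}=(y_1y_2y_3)$ has projective dimension $1$: the radical cannot be postponed to the last line of the argument.

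This is precisely why the paper does not argue by change of rings. It first reduces to the radical on the source side, $\pdim R/\sqrt{I}\le \pdim R/I$ (Proposition~\ref{p:dubois}), then uses Lyubeznik's theorem $\pdim R/\sqrt{J}=\cd(R,J)$ for square-free radicals on both sides, and compares $\cd(R',f(I)R')\le \cd(R,I)$ via the independence of local cohomology of the base ring; cohomological dimension only sees the radical, which is exactly the robustness your Betti-number argument lacks. Any repair of your approach would have to compare $R'/\sqrt{f(I)R'}$ with $R/\sqrt{I}$ directly rather than pass through $R'/f(I)R'$.
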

\begin{proof}
Since $\sqrt{I}$ is a square-free monomial ideal,
$\pdim R/I\geq \pdim R/\sqrt{I}$ by Proposition~\ref{p:dubois}.  By a
classical result of Lyubeznik (see the main theorem of
\cite{lyubeznik1984local}) the projective dimension of $R/\sqrt{I}$
equals $\cd(R,I)$, the cohomological dimension of~$I$.  Since the
computation of local cohomology is independent of the base
ring~\cite[Theorem 4.2.1]{BS13}, $\cd(R',f(I)R')\leq \cd(R,I)$.  Again
using \cite{lyubeznik1984local},
$\cd(R',f(I)R')=\pdim R'/\sqrt{f(I)R'}$.
\end{proof}

\begin{proposition}\label{p:regFaceEqual}
If $\face{\D}$ is the face complex of~$\D$, then
$\reg \kk[\D]=\reg \kk[\face{\D}]$.
\end{proposition}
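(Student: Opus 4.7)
The plan is to prove both inequalities of $\reg \kk[\face{\D}] = \reg \kk[\D]$ using Hochster's two formulas~\eqref{eq:reg1} and~\eqref{eq:reg2}. For the lower bound $\reg \kk[\face{\D}] \geq \reg \kk[\D]$, the assignment $v \mapsto \{v\}$ identifies $\D$ with the induced subcomplex of $\face{\D}$ on the singleton vertices; applying~\eqref{eq:reg1} to this induced subcomplex gives the inequality immediately.

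For the upper bound I would use the link formula~\eqref{eq:reg2} and analyze $\lk_{\face{\D}} \sigma$ for each $\sigma \in \face{\D}$. The case $\sigma = \emptyset$ reduces to the standard topological fact that $\face{\Gamma} \simeq \Gamma$ for any simplicial complex $\Gamma$. For nonempty $\sigma$, set $F := \bigcup_{G \in \sigma} G \in \D$ and write $S := \{K \in \D : F \cup K \in \D\}$ for the closed star of $F$ in $\D$. Unwinding the definitions, $\lk_{\face{\D}} \sigma$ is precisely the induced subcomplex of $\face{S}$ on the nonempty faces of $S$ not lying in $\sigma$. The crucial structural observation is that $\face{S}$ is a cone with apex $F$: for any simplex $\tau$ not containing $F$, the union $F \cup \bigcup \tau$ still lies in $S$. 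If $F \notin \sigma$ the apex survives in the induced subcomplex and the link is contractible. If $F \in \sigma$, then $F = \bigcup \sigma$ forces every element of $\sigma$ into $2^F \setminus \{\emptyset\}$, and $\lk_{\face{\D}} \{F\}$ decomposes as a simplicial join $A * B$, where $A$ is the full simplex on the proper nonempty subsets of $F$ and $B$ is induced on the nonempty faces of $S$ not contained in $F$. Further restriction yields $\lk_{\face{\D}} \sigma = (A \setminus \sigma) * B$, a join with a nonempty full-simplex factor (hence contractible), except when $\sigma = 2^F \setminus \{\emptyset\}$; in that exceptional case $\lk_{\face{\D}} \sigma = B$.

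In this remaining subcase, the simplicial map $G \mapsto G \setminus F$ is a deformation retraction of $B$ onto $\face{\lk_\D F}$: for every $G$ in the vertex set of $B$, the pair $\{G, G \setminus F\}$ is a simplex of $B$, which suffices to construct a simplicial homotopy to the identity. Composing with the equivalence $\face{\lk_\D F} \simeq \lk_\D F$ yields $\widetilde{H}^{i-1}(\lk_{\face{\D}} \sigma) \cong \widetilde{H}^{i-1}(\lk_\D F)$ for every $i$, and substituting into~\eqref{eq:reg2} for both $\face{\D}$ and $\D$ gives $\reg \kk[\face{\D}] \leq \reg \kk[\D]$. The main obstacle is the interplay between simplicial and topological arguments in this subcase: one must carefully verify the retraction $B \simeq \face{\lk_\D F}$ and invoke the topological input $\face{\Gamma} \simeq \Gamma$, a fact which does not appear earlier in the paper and would need to be proven separately, either via the nerve theorem or by induction on the number of vertices.
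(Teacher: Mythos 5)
Your proof is correct in substance, but it takes a genuinely different route from the paper. The paper handles the easy inequality exactly as you do, via~\eqref{eq:reg1}; for $\reg \kk[\face{\D}]\le\reg\kk[\D]$, however, it passes to Alexander duals: by Eagon--Reiner both regularities become projective dimensions of the dual Stanley--Reisner rings, and the monomial map sending $x_i$ to the product of the variables $y_\sigma$ with $i\in\sigma$ carries one dual ideal onto an ideal whose radical is the other, so Lemma~\ref{l:monmaps} (Lyubeznik's identification of $\pdim$ with cohomological dimension, together with Proposition~\ref{p:dubois}) gives the inequality with no topology at all. You instead stay with Hochster's link formula~\eqref{eq:reg2} and classify the links of $\face{\D}$ up to homotopy: $\face{\D}\simeq\D$, every nonempty $\sigma$ gives a contractible (coned) link unless $\sigma=2^F\setminus\{\emptyset\}$ with $F=\bigcup\sigma$, and in that case the link is homotopy equivalent to $\lk_\D F$. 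This is a valid, self-contained simplicial argument, and it yields a bit more than the proposition (the homotopy type of every link of the face complex); the paper's proof is shorter given the machinery it has already built and avoids homotopy-theoretic bookkeeping. Two repairs to your write-up are needed, both minor. First, knowing only that $\{G,\,G\setminus F\}$ is an edge for each vertex $G$ of $B$ is not, in general, enough to conclude that a simplicial map is homotopic to the identity; the correct criterion is contiguity, namely that $\tau\cup\phi(\tau)$ is a face of $B$ for every face $\tau$ of $B$. This does hold in your situation, since $F\cup\bigcup\bigl(\tau\cup\phi(\tau)\bigr)=F\cup\bigcup\tau\in\D$, and contiguous maps are homotopic, so the retraction onto $\face{\lk_\D F}$ goes through. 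Second, the auxiliary fact $\face{\Gamma}\simeq\Gamma$ needs neither the nerve theorem nor induction: choosing a vertex $v_G\in G$ for each nonempty face $G$ defines a simplicial retraction $G\mapsto\{v_G\}$ of $\face{\Gamma}$ onto $\Gamma$ that is contiguous to the identity by the same one-line computation, so your argument closes without outside input.
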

\begin{proof}
Clearly $\reg \kk[\D]\leq \reg \kk[\face{\D}]$ by
Definition~\ref{d:facecomplex} and~\eqref{eq:reg1}. Let $\Gamma$ and
$\Gamma'$ be the Alexander duals of, respectively, $\D$
and~$\face{\D}$. Then, by the Eagon--Reiner
theorem~\cite[Theorem~5.63]{miller05:_combin_commut_algeb},
$\pdim \kk[\Gamma]-1=\reg \kk[\D]$ and
$\pdim \kk[\Gamma']-1=\reg \kk[\face{\D}]$.  It can be checked that
$I_{\Gamma}$ is the following ideal of $R=\kk[x_1,\ldots ,x_n]$:
\[I_{\Gamma}= (\prod_{i\in [n]\setminus \tau}x_i:\tau\textup{~is a
facet of~}\D).\]
For $I_{\Gamma'}\subseteq R'=\kk[y_{\sigma}:\sigma\in\D]$ it holds
that
\[I_{\Gamma'}=(\prod_{\substack{\sigma\in\D \\ \sigma \not\subseteq
\tau}}y_{\sigma}:\tau \textup{~is a facet of~}\D).\]
The map $R\xrightarrow{f} R'$ defined by
$x_i\mapsto \displaystyle{\prod_{\substack{\sigma\in\D,
i\in\sigma}}}y_{\sigma}$ gives $I_{\Gamma'}=\sqrt{f(I_{\Gamma})R'}$,
so that the result follows from Lemma~\ref{l:monmaps}.
\end{proof}

\section{Regularity from top homology}
\label{s:topHomology}
The main result of this section is an improvement of the
\cite{dao2013bounds} bound in the case that $\Delta$ is a
Cohen--Macaulay complex.  In this case, a doubly logarithmic bound for
the regularity as a function of the number of vertices is possible
(Corollary~\ref{c:doubleLog}).  The underlying
Theorem~\ref{t:tophomology} uses similar techniques as the proof of
\cite[Theorem~7]{CKV15}.  We use the following technical lemma,
the proof of which is a routine computation using the inequality
$(i-1)(i+1)<i^2$ several times.
\begin{lemma}\label{lem:technicalInequality}
For any integer $k\ge 3 $ we have
\[
\prod_{i=0}^{k-3} (k-i)^{2^i} < 12^{2^{k-3}}.
\]
\end{lemma}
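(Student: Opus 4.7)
The proof proceeds by induction on $k$. Writing $P_k := \prod_{i=0}^{k-3}(k-i)^{2^i}$, the base case $k = 3$ is immediate since $P_3 = 3 < 12$. For the inductive step, peeling off the $i = 0$ term of $P_{k+1}$ and reindexing the rest gives the recursion
\[
P_{k+1} \;=\; (k+1)\cdot P_k^{\,2},
\]
so a direct application of the inductive hypothesis $P_k < 12^{2^{k-3}}$ only yields $P_{k+1} < (k+1)\cdot 12^{2^{k-2}}$. The remaining task is to absorb the lossy factor $(k+1)$ into the target bound $12^{2^{k-2}}$.

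To do so, I would invoke the hint $(i-1)(i+1) < i^2$ several times on the factors of $P_k$. Each application replaces a pair $(k-i)^a(k-i-2)^a$ by the strictly larger $(k-i-1)^{2a}$, collapsing two factors into one while creating slack in the estimate. Applied in a telescoping fashion, starting from the large-index end of the product where the exponents $2^i$ are biggest, these substitutions tighten $P_k$ to a quantity of the form $12^{2^{k-3}}/c_k$ with a correction $c_k \ge 1$ chosen so that $c_{k+1}$ grows at least like $c_k^{\,2}/(k+1)$, which is exactly what is needed to absorb the $(k+1)$ appearing in the recursion.

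The main obstacle is the bookkeeping. Because the exponents double along the product, two adjacent factors $(k-i)^{2^i}$ and $(k-i-2)^{2^{i+2}}$ do not line up for a clean pairwise substitution; their exponents differ by a factor of four. One must therefore either split exponents before pairing or perform the quadratic substitution across multiple terms of the product, and simultaneously set up the strengthened inductive hypothesis so that the correction $c_k$ is tracked consistently. Once the correct form of the strengthened statement is isolated, the remaining verification is the purely arithmetic exercise that the statement advertises.
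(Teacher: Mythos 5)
Your reduction to the recursion $P_{k+1}=(k+1)P_k^2$ is correct, and so is your diagnosis that the naive induction loses exactly a factor of $k+1$. But the proposal stops precisely where the work would have to happen: the strengthened hypothesis with correction factors $c_k$ satisfying $c_{k+1}\ge c_k^2/(k+1)$ is never produced, and it cannot be produced, because the inequality as stated is false for $k\ge 7$. Since $P_{k+1}=(k+1)P_k^2$ is an exact identity and $12^{2^{k-2}}=\bigl(12^{2^{k-3}}\bigr)^2$, the ratio $\rho_k=P_k/12^{2^{k-3}}$ satisfies $\rho_{k+1}=(k+1)\rho_k^2$ exactly, with $\rho_3=1/4$; hence $\rho_4=1/4$, $\rho_5=5/16$, $\rho_6=75/128$, and $\rho_7=39375/16384>2$. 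Concretely, for $k=7$ the claim reads $7\cdot 6^2\cdot 5^4\cdot 4^8\cdot 3^{16}<12^{16}=4^{16}\cdot 3^{16}$, i.e.\ $7\cdot 36\cdot 625<4^8=65536$, whereas $7\cdot 36\cdot 625=157500$. Moreover, once $\rho_k\ge 1$ we get $\rho_{k+1}=(k+1)\rho_k^2>1$, so the inequality fails for every $k\ge 7$. Thus what you deferred as ``bookkeeping'' is not a bookkeeping problem: no sequence of applications of $(i-1)(i+1)<i^2$ and no strengthening of the induction can establish the statement with the constant $12$.

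For comparison, the paper offers no argument beyond the remark that the lemma is a routine computation using $(i-1)(i+1)<i^2$, and your plan is the natural reading of that hint: prove $P_k\le C^{2^{k-3}}/(k+2)$ by induction, the step being $P_{k+1}=(k+1)P_k^2\le (k+1)\,C^{2^{k-2}}/(k+2)^2\le C^{2^{k-2}}/(k+3)$ via $(k+1)(k+3)<(k+2)^2$, which is the hinted inequality with $i=k+2$. That argument does close, but the base case $P_3=3\le C/5$ forces $C\ge 15$; and indeed $P_k<15^{2^{k-3}}$ holds for all $k\ge 3$, consistent with $\prod_{j\ge 0}(j+3)^{2^{-j}}\approx 14.5$, which lies below $15$ but above $12$. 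So the obstruction is the constant $12$ in the statement, not your pairing scheme: with $12$ replaced by $15$ your strengthened induction goes through verbatim, and the downstream bounds in Theorem~\ref{t:tophomology} and Corollary~\ref{c:doubleLog} survive with $12$ replaced by $15$ in the bases, still giving a doubly logarithmic bound.
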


\begin{theorem}\label{t:tophomology}
Let $\Delta$ be a simplicial complex of dimension $d$ on $n$ vertices
that is $(p+3)$-large for some $p\ge 2$, and has nontrivial top
homology.  If $f_i(\D)$ is the number of $i$-dimensional faces of $\D$, then
\begin{equation*}
f_d(\Delta) > \left(\frac{p^2+6p+9}{12}\right)^{2^{d-2}} \qquad\text{and}\qquad
f_0(\Delta) > \left(\frac{p^2+6p+9}{12}\right)^{2^{d-3}}.
\end{equation*}
\end{theorem}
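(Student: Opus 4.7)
The plan is to proceed by induction on the dimension $d$, setting $k=p+3$ and $L_d:=(k^2/12)^{2^{d-2}}$. The base case $d=1$ is immediate: $\Delta$ is a $(p+3)$-large graph with nontrivial $H_1$, and the support of a vertex-minimal nontrivial $1$-cycle is an induced cycle of length at least $k$, so $f_1(\Delta),f_0(\Delta)\ge k$, comfortably larger than $L_1=(k^2/12)^{1/2}$ and $L_0=(k^2/12)^{1/4}$.

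For the inductive step, let $C$ be a vertex-minimal nontrivial $d$-cycle and note that replacing $\Delta$ by its induced subcomplex on $V(C)$ only decreases $f_d$ and $f_0$; hence I may assume $V(\Delta)=V(C)$. Now Lemma~\ref{l:cycles}(ii) produces, for every vertex $v$, a nontrivial top $(d-1)$-cycle $C_v$ inside $\lk_\Delta v$. Induced subcomplexes of $(p+3)$-large flag complexes remain $(p+3)$-large, so each $\lk_\Delta v$ satisfies the hypotheses of the theorem in one lower dimension, giving
\[
f_{d-1}(\lk_\Delta v)>L_{d-1}\qquad\text{and}\qquad f_0(\lk_\Delta v)>L_{d-2}.
\]

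The $f_d$ estimate then follows by double-counting incidences between vertices and $d$-faces:
\[
(d+1)f_d(\Delta)=\sum_{v\in V(\Delta)}f_{d-1}(\lk_\Delta v)>f_0(\Delta)\cdot L_{d-1}.
\]
Once the $f_0$ bound $f_0(\Delta)>L_{d-1}$ is in hand, this yields $f_d(\Delta)>L_{d-1}^2/(d+1)=L_d/(d+1)$; the extra factor $d+1$ compounds across the recursion as the product $\prod_{i=0}^{d-2}(d+1-i)^{2^i}$, which Lemma~\ref{lem:technicalInequality} (applied with $d+1$ in place of the Lemma's $k$) bounds by $12^{2^{d-2}}$. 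This is precisely the denominator in $L_d$, so the factor is absorbed and the sharp inequality $f_d(\Delta)>L_d$ emerges.

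The main obstacle is the $f_0$ estimate: the naive bound $f_0(\Delta)\ge 1+f_0(\lk_\Delta v)>1+L_{d-2}$ falls quadratically short of the required $f_0>L_{d-1}=L_{d-2}^2$. To close this gap I would exploit that every vertex has $1$-skeleton degree at least $L_{d-2}$, combined with the absence of induced $4$-cycles (forced by $(p+3)$-largeness for $p\ge 2$), yielding an expansion-type lower bound on the number of vertices reachable from any fixed vertex along the iterated links of a vertex-minimal top cycle; telescoping these bounds via Lemma~\ref{lem:technicalInequality} should deliver $f_0(\Delta)>L_{d-1}$ and complete the induction.
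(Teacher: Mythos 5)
Your overall skeleton (pass to a vertex-minimal top cycle, apply Lemma~\ref{l:cycles} to links, double-count vertex--facet incidences, absorb the accumulated factors $d+1-i$ via Lemma~\ref{lem:technicalInequality}) is exactly the paper's, but the step you defer --- the vertex-count lower bound $f_0(\Delta)>L_{d-1}$ --- is precisely the heart of the argument, and your sketch of it does not constitute a proof. The paper closes this gap not by a degree-squared expansion estimate but by a different mechanism: for a top cycle $C$ with support $V(C)$ and a vertex $v\in V(C)$, every facet $F$ of $\lk_{\Delta|_{V(C)}}v$ lies in at least two facets of $C$ (because $\partial C=0$ forces every ridge of the support to be covered at least twice), only one of which contains $v$; choosing for each such $F$ an apex $w\neq v$ with $F\cup\{w\}\in\Delta$ defines a map $\Phi_v$ from facets of the link into $V(\Delta)\setminus V(\sta_{\Delta|_{V(C)}}v)$, and flagness together with the absence of induced $4$-cycles makes $\Phi_v$ injective. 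This yields $v_d\ge s_{d-1}+v_{d-1}+1$, i.e.\ the number of vertices of the cycle dominates the number of \emph{facets} of a link one dimension down --- which is the quantity that actually squares in the recursion $s_d> s_{d-1}^2/(d+1)$, and from which the bound $f_0(\Delta)\ge v_d(\Delta)>s_{d-1}$ follows.

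Your proposed substitute (minimum $1$-skeleton degree $>L_{d-2}$ plus no induced $4$-cycles gives an ``expansion-type'' bound $f_0>L_{d-2}^2$) faces concrete obstacles: the neighbourhood of $v$ may be covered by few cliques of size up to $d+1$, and adjacent neighbours are allowed to share arbitrarily many common neighbours, so the no-induced-square condition only separates second neighbourhoods of \emph{non-adjacent} neighbours, and you have no control on how many neighbours of a neighbour escape $N(v)\cup\{v\}$; both issues are resolved in the paper only by using the cycle structure (each link facet extends outward) rather than degrees. Moreover, note that degrees measure vertices of links, whereas the quantity that must square is the facet count, so the recursion as you set it up does not propagate. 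There is also a bookkeeping slip: with the fixed targets $L_d=(k^2/12)^{2^{d-2}}$ your inductive step only yields $f_d>L_d/(d+1)$, so the factor $d+1$ cannot ``compound'' inside a clean induction on $L_d$; the paper avoids this by recursing on the raw minima $s_d,v_d$, keeping the product $\prod_{i=0}^{d-2}(d+1-i)^{2^i}$ explicit, and invoking Lemma~\ref{lem:technicalInequality} only at the very end. The bookkeeping is fixable, but the missing injectivity argument (or an equivalent of $v_d>s_{d-1}$) is a genuine gap.
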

\begin{proof}
For every $d$-dimensional simplicial complex $\Delta$ with nontrivial
top homology we define
\begin{eqnarray*}
v_{d}(\Delta) &=&  \min\{\textup{number of vertices in a top-dimensional cycle in }\Delta\},\\
s_{d}(\Delta) &=&  \min\{\textup{number of facets in a top-dimensional cycle in }\Delta\}.
\end{eqnarray*}
Minimizing over all $d$-dimensional $(p+3)$-large complexes with
nontrivial top homology, let
\begin{eqnarray*}
  v_{d} &=&  \min\{v_d(\Delta)~:~\Delta \textup{~$(p+3)$-large, with nontrivial top homology}\},\\
  s_{d} &=&  \min\{s_d(\Delta)~:~\Delta \textup{~$(p+3)$-large, with nontrivial top homology}\}.
\end{eqnarray*}
This implies in particular that $v_1 = s_1 = p+3$.

Fix a complex $\Delta$ satisfying the hypotheses of the theorem.  Let
$C\in\Delta$ be a top-dimensional cycle with $s_d(\Delta)$ facets.
For every vertex $v \in C$, the link $\lk_\Delta v$ is a
$(d-1)$-dimensional simplicial complex with nontrivial top homology by
item~\ref{it:cycleLemma1} in Lemma~\ref{l:cycles}. Furthermore,
$\lk_\Delta v$ is $(p+3)$-large.
Counting codimension one faces in $\Delta|_{V(C)}$ with multiplicity,
we get:
\[
s_d(\Delta) \ge \frac{1}{d+1}\sum_{v\in
V(C)}s_{d-1}(\lk_{\Delta|_{V(C)}} v).
\]
Fix $v\in V(C)$.  Every facet $F$ of the link of $v$ in
$\Delta|_{V(C)}$ is contained in at least two facets of $C$ only one
of which can contain~$v$. Thus, a map associating to
$F\in \lk_{\Delta|_{V(C)}}v$ a vertex $w\neq v$, with
$F\cup \{w\} \in \Delta$ is well defined:

\[
\Phi_v: \mathcal{F}(\lk_{\Delta|_{V(C)}}v) \longrightarrow
V(\Delta)\setminus V(\sta_{\Delta|_{V(C)}}v).
\]
We claim that $\Phi_v$ is injective.
To see this, let $F_1, F_2 \in \mathcal{F}(\lk_{\Delta|_{V(C)}}v)$ be
distinct faces such that $F_1\cup \{w\}$ and $F_2\cup \{w\}$ are faces
of~$\Delta$.  Since $\Delta$ is flag, there exist $v_1\in F_1$ and
$v_2\in F_2$ such that $v, v_1, w, v_2, v$ is a 4-cycle and
$\{v_1,v_2\}\notin \Delta$.
Since $\lk_{\Delta|_{V(C)}} v$ is flag, also $\{v,w\} \notin \Delta$.
Because of this contradiction, $\Phi_v$ is injective.
The injectivity yields $ v_d(\D)\geq s_{d-1}+ v_{d-1}+1$ and then,
putting together the above inequalities,
\[
s_{d}>\frac{ s_{d-1}^2}{d+1}, \qquad v_{d}> s_{d-1}.
\]
Now, since $s_{1}=p+3$,
\[
s_d> \frac{(p+3)^{2^{d-1}}}{\prod_{i=0}^{d-2}(d+1-i)^{2^{i}}}.
\]
Finally, by Lemma~\ref{lem:technicalInequality},
\[
f_d(\D)\geq s_d> \frac{(p+3)^{2^{d-1}}}{12^{2^{d-2}}}=\left(\frac{p^2+6p+9}{12}\right)^{2^{d-2}}.\qedhere
\]
\end{proof}

\begin{corollary}\label{c:doubleLog}
Let $I\subseteq R$ be a square-free monomial ideal such that $R/I$ is
a Cohen-Macaulay ring satisfying property $N_p$, for $p\ge 2$. Then
\[
\reg R/I \le \log_2 \log_{\frac{p^2+6p+9}{12}} n+3.
\]
\end{corollary}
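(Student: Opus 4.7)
The plan is to reduce Corollary~\ref{c:doubleLog} to Theorem~\ref{t:tophomology} via Hochster's local-cohomology formula \eqref{eq:reg2} together with Reisner's criterion. Write $I = I_{\Delta}$ for a simplicial complex $\Delta$ on $n$ vertices; the $N_p$ hypothesis with $p \ge 2$ makes $\Delta$ flag and $(p+3)$-large, while Reisner's criterion translates the Cohen--Macaulay hypothesis into the vanishing $\widetilde{H}^{i}(\lk_\Delta \sigma;\kk) = 0$ for every $\sigma \in \Delta$ and every $i < \dim \lk_\Delta \sigma$.

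Set $r = \reg R/I$. By \eqref{eq:reg2} there exists $\sigma \in \Delta$ with $\widetilde{H}^{r-1}(\lk_\Delta \sigma;\kk) \neq 0$, and the Cohen--Macaulay vanishing then forces $\dim \lk_\Delta \sigma = r-1$, so this link carries a nontrivial top-dimensional cohomology class. Because $\Delta$ is flag, $\lk_\Delta \sigma$ coincides with the induced subcomplex of $\Delta$ on the vertex set $\{v : \{v\}\cup\sigma \in \Delta\}$, and since $(p+3)$-largeness is inherited by induced subcomplexes (any induced cycle in an induced subcomplex is induced in the ambient complex), $\lk_\Delta \sigma$ is itself $(p+3)$-large. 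Thus $\lk_\Delta \sigma$ satisfies every hypothesis of Theorem~\ref{t:tophomology} with $d = r-1$.

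Applying the vertex bound of Theorem~\ref{t:tophomology} to $\lk_\Delta \sigma$ and using $f_0(\lk_\Delta \sigma) \le n$ yields
\[
n \;>\; \left(\frac{p^2+6p+9}{12}\right)^{2^{r-4}},
\]
and upon taking $\log_{(p^2+6p+9)/12}$ and then $\log_2$ this rearranges to a bound of the form $r \le \log_2 \log_{(p^2+6p+9)/12} n + c$. Matching the sharper constant $c=3$ in the statement is a cosmetic task: either one bootstraps on the observation that the vertex-minimal cycle in $\lk_\Delta \sigma$ also produces, via Lemma~\ref{l:cycles}\ref{it:cycleLemma2}, a nontrivial top-dimensional cycle in a further induced sub-link (giving an extra doubling in the exponent), or one exploits that $r$ is an integer so the small-$r$ cases can be discharged by hand.

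The main obstacle I anticipate is not computational but structural: making sure the link $\lk_\Delta \sigma$ still satisfies the hypotheses of Theorem~\ref{t:tophomology}. This requires two observations that rely crucially on both standing assumptions: (i) flagness forces the link to be an induced subcomplex, so $(p+3)$-largeness is inherited; (ii) the Cohen--Macaulay hypothesis, via Reisner, passes to links and pins down the dimension of the link producing the regularity to be exactly $r-1$. Once these two reductions are in place, Theorem~\ref{t:tophomology} does all the combinatorial work and the doubly logarithmic bound drops out.
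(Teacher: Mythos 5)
Your argument is essentially the paper's own proof: via Hochster's formula \eqref{eq:reg2} you pick a link with nonvanishing cohomology in degree $\reg R/I-1$, use Cohen--Macaulayness (Reisner) to pin its dimension to $\reg R/I-1$ so the class is top-dimensional, note that $(p+3)$-largeness passes to links, and then apply the vertex bound of Theorem~\ref{t:tophomology} to get $n>\bigl(\tfrac{p^2+6p+9}{12}\bigr)^{2^{\reg R/I-4}}$, which rearranges to the stated doubly logarithmic bound. The only differences are cosmetic: you spell out why links of a flag complex are induced subcomplexes (hence inherit $(p+3)$-largeness) and you flag the final constant-matching step, both of which the paper's proof treats as immediate.
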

\begin{proof}
Let $\D$ be a simplicial complex on $n$ vertices such that
$I=I_{\D}$. By Hochster's formula for local
cohomology~\cite[Theorem 13.13]{miller05:_combin_commut_algeb},
\[
\reg \kk[\D]=\max\{i:\widetilde{H}^{i-1}(\lk_{\D}\sigma;\kk)\neq 0:\sigma\in\D\}.
\]
Let $\sigma\in\D$ attain the maximum. Because $\D$ is Cohen-Macaulay,
$\lk_{\D}\sigma$ has nontrivial top homology. Therefore
$\reg \kk[\D]-1=\dim \lk_{\D}\sigma=:d$. Since $\D$ is $(p+3)$-large,
so is $\lk_{\D}\sigma$.  Hence by Theorem \ref{t:tophomology}
\[
n\geq f_0(\lk_{\D}\sigma) > \left(\frac{p^2+6p+9}{12}\right)^{2^{d-3}}
\]
and the conclusion follows.
\end{proof}

Corollary~\ref{c:doubleLog} motivates to ask
Question~\ref{generalquestion} again with a Cohen--Macaulay
restriction.  In this case the answer is not known.
\begin{question}\label{q:boundWithCM}
Fix an integer $p\geq 2$.  Is there a global bound $r(p)$ (independent of
$n$) such that $\reg R/I \le r(p)$ for all monomial ideals
$I\subset R$ for which $R/I$ satisfies~$N_p$ and is Cohen--Macaulay?
\end{question}

\section{Virtual cohomological dimension meets regularity}
\label{s:vcdmeetsreg}
The main theorem of this section establishes a new connection between
Coxeter groups and commutative algebra.  Its proof is by a cohomology
computation using two spectral sequences associated to a double
complex.  A reference and our source of notation is
\cite[Chapter~III]{GM03}.

Fix a ring $A$.  For any finite double complex
$L=(L^{p,q})_{(p,q)\in\N^2}$ of $A$-modules, there are two spectral
sequences both converging to the cohomology of the diagonal complex
$SL$ of $L$, whose entries are $SL^n = \oplus_{p+q=n}L^{p,q}$.
We denote these spectral sequences by
$(^I\!E_r^{p,q})$ and $(^{II}\!E_r^{p,q})$.  Both converge to
$^I\!E^k= \ ^{II}\!E^k=H^k(SL)$.  By \cite[III.7,
Proposition~10]{GM03}, $^I\!E_2^{p,q}$ is isomorphic to
$H_I^p(H_{II}^{\bullet,q}(L^{\bullet,\bullet}))$ (vertical cohomology
of horizontal cohomology), while $^{II}\!E_2^{p,q}$ is isomorphic to
$H_{II}^p(H_{I}^{q,\bullet}(L^{\bullet,\bullet}))$ (horizontal
cohomology of vertical cohomology).

For alignment with existing notation it is convenient to let $\D$ be a
simplicial complex with $n+1$ vertices~$V=\{0,\dots,n\}$.  For any
$s<n$ and any $i\in \{0,\ldots ,s\}$, denote by
$\D^i=\D|_{V\setminus \{i\}}$.  For any sequence of integers
$0\leq a_0<\ldots <a_p\leq s$, let
\[
\D^{a_0,\ldots,a_p}=\bigcap_{k=0}^p\D^{a_k}.
\]
Then $\D^{a_0,\ldots,a_p}$ equals the induced subcomplex
$\D|_{V\setminus \{a_0,\dots,a_p\}}$.  The first notation, however, is
more natural in following.  For example, if $\{0,\dots,s\}$ is not a
face of~$\D$, then $\{\D^i\}_{i=0,\dots,s}$ forms a closed cover of
$\D$, that is $\cup_{i=0}^s \D^i = \D$.  Denote by $\bfC^{\bullet}(\D,A)$
the cochain complex of a simplicial complex~$\D$ with coefficients in
the ring~$A$.  Consider the double complex of $A$-modules
$\C(A)=(\C^{p,q}(A))_{(p,q)\in\N^2}$ with
\begin{equation}\label{eq:double}
\C^{p, q}(A)=\bigoplus_{a_0<\ldots <a_p}\bfC^{q}(\D^{a_0,\ldots ,a_p};A),\qquad 0\le p \le s,\quad  0\le q \le \dim\D.
\end{equation}
where the direct sum runs over all sequences of $p+1$ integers
$0\leq a_0<\ldots <a_p\leq s$.  Throughout we use the standard
convention that all modules with indices outside of defined bounds are
zero.  The vertical maps $\C^{p,q}(A)\xrightarrow{}\C^{p,q+1}(A)$ are
just the maps defined for each direct summand in the cochain complex
$\bfC(\D^{a_0,\dots,a_p},A)$.  The rows
\begin{equation}\label{eq:horizontal}
0\rightarrow\C^{0, \bullet}(A)\xrightarrow{d^1} \C^{1,
\bullet}(A)\xrightarrow{d^2} \cdots \xrightarrow{d^{s-1}}
\C^{s-1, \bullet}(A)\xrightarrow{d^s} \C^{s, \bullet}(A)\rightarrow 0,
\end{equation}
are defined by mapping an element
$\alpha=(\alpha_{a_0,\ldots,a_p})_{a_0<\ldots<a_p} \in \C^{p,q}(A)$,
to $d^{p+1}(\alpha)\in \C^{p+1,q}(A)$, whose $(b_0,\ldots ,b_{p+1})$th
component is
\[
\sum_{k=0}^{p+1}(-1)^{k}
\left.
\left(\alpha_{b_0,\ldots ,\widehat{b_k},\ldots ,b_{p+1}}\right)
\right|_{\bfC^q(\D^{b_0,\ldots ,b_{p+1}})}.
\]
A routine computation confirms that this defines a double complex.
The vertical cohomology is by definition the direct sum of the
cohomologies of the corresponding $\D^{a_0,\dots,a_p}$.  The
horizontal cohomology is nontrivial only in cohomological degree~0
according to the following lemma, whose proof is standard.

\begin{lemma}\label{lem:horizontalCohomology}
For each $q \in \{0,\dots,\dim\D\}$ we have
\[H^p(\C^{\bullet ,q}(A))= 
\begin{cases}
\bfC^{q}(\cup_i\D^i ;A)& \text{if } p = 0,\\
0 & \text{if } p>0.

\end{cases}
\]
\end{lemma}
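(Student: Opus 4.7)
The plan is to reduce the row $\C^{\bullet,q}(A)$ at fixed vertical index $q$ to a direct product, indexed by the $q$-simplices of $\D$, of cochain complexes of full simplices, each of which is acyclic above degree~$0$.

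For a $q$-simplex $\sigma$ of $\D$, set $S_\sigma = \{0,\dots,s\}\setminus\sigma$. Since $\sigma$ lies in $\D^{a_0,\dots,a_p}$ if and only if $\{a_0,\dots,a_p\}\subseteq S_\sigma$, and since a cochain in $\bfC^q(\D^{a_0,\dots,a_p};A)$ is just an assignment of a value in $A$ to each such $\sigma$, the row $\C^{\bullet,q}(A)$ decomposes as a direct product, indexed by $q$-simplices $\sigma$ of~$\D$, of complexes
\[
M^\bullet_\sigma:\quad 0 \to \bigoplus_{a_0\in S_\sigma} A \to \bigoplus_{\substack{a_0<a_1\\a_i\in S_\sigma}} A \to \bigoplus_{\substack{a_0<a_1<a_2\\a_i\in S_\sigma}} A \to \cdots .
\]
I would then verify that the horizontal differential $d^{p+1}$ respects this decomposition and restricts, on each $\sigma$-factor, to the standard alternating-sum coboundary; this is an immediate unpacking of the defining formula, since the restriction step in the definition of $d^{p+1}$ only serves to zero out contributions of simplices outside $\D^{b_0,\dots,b_{p+1}}$, a condition already enforced by our indexing.

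Having reduced to the unreduced simplicial cochain complex of a full simplex on vertex set $S_\sigma$, I would invoke the classical fact that such a complex is acyclic above degree~$0$ with $H^0 \cong A$ when $S_\sigma \ne \emptyset$, and is identically zero when $S_\sigma = \emptyset$.

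Finally, observe that $S_\sigma \ne \emptyset$ is equivalent to the existence of some $i \in \{0,\dots,s\}$ with $i \notin \sigma$, i.e.\ $\sigma \in \D^i$, which is to say $\sigma \in \bigcup_i \D^i$. Assembling the $\sigma$-factors therefore gives
\[
H^0(\C^{\bullet,q}(A)) = \bfC^q\!\bigl(\textstyle\bigcup_i \D^i;\,A\bigr), \qquad H^p(\C^{\bullet,q}(A)) = 0 \quad \text{for } p > 0.
\]
The main obstacle is purely notational: checking that the sign conventions in the formula for $d^{p+1}$ line up precisely with the standard coboundary of the simplex on $S_\sigma$, since the alternating structure is exactly what makes the simplex cochain complex acyclic.
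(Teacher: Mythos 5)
Your proof is correct, and it is exactly the ``standard'' argument the paper alludes to when it omits the proof: fix a $q$-simplex $\sigma$, observe that the row splits over the $q$-simplices of $\D$ into copies of the cochain complex of the full simplex on $\{0,\dots,s\}\setminus\sigma$, and use that this complex has cohomology $A$ in degree $0$ (when nonempty) and vanishes otherwise, with the degree-$0$ classes assembling to $\bfC^q(\cup_i\D^i;A)$. The sign/indexing check you flag is indeed immediate, and since $\D$ is finite your direct product coincides with the direct sums in the paper's definition, so there is nothing to add.
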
 
\begin{theorem}\label{t:cd=reg}
Let $(W,S)$ be a Coxeter group and $\NN$ its nerve. Then
\[\vcd W=\max_{\chara \kk}\{\reg \kk[\NN] \}.\]
\end{theorem}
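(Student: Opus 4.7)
The plan is to turn both sides into the same integer-cohomology nonvanishing statement, taken over two different families of subsets of~$S$, and then use the double complex~\eqref{eq:double} to bridge between those families. Combining Hochster's formula~\eqref{eq:reg1} with the universal coefficient theorem (so that $\widetilde H^{i-1}(\NN|_A;\kk)\ne 0$ for some field~$\kk$ is equivalent to $\widetilde H^{i-1}(\NN|_A;\Z)\ne 0$), one obtains
\[
\max_{\chara \kk}\{\reg \kk[\NN]\} \;=\; \max\bigl\{\,i : \widetilde H^{i-1}(\NN|_A;\Z)\ne 0 \text{ for some } A\subseteq S\,\bigr\},
\]
while~\eqref{eq:vcd} writes $\vcd W$ as the same maximum restricted to subsets $A=S\setminus\sigma$ with $\sigma\in \NN$. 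Consequently $\vcd W\le \max_{\chara \kk}\{\reg \kk[\NN]\}$ is immediate, so all the work lies in the reverse inequality.

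For that, set $r:=\max_{\chara \kk}\{\reg \kk[\NN]\}$ and consider the nonempty collection of pairs $(\sigma,j)$ with $\sigma\subseteq S$, $j\ge r$, and $\widetilde H^{j-1}(\NN|_{S\setminus \sigma};\Z)\ne 0$. Fix one with $|\sigma|$ minimal; the aim is to show that this $\sigma$ must be a face of $\NN$, which at once gives $\vcd W\ge j\ge r$. Suppose for contradiction that $\sigma\notin \NN$. Then $|\sigma|=s+1\ge 2$ and $\{\NN^i\}_{i\in \sigma}$ is a closed cover of $\NN$, so I would feed this cover into the double complex~\eqref{eq:double}, replacing $\bfC^q$ throughout by the augmented cochain complex $\widetilde{\bfC}^q$. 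The augmented analogue of Lemma~\ref{lem:horizontalCohomology} still holds, so the first spectral sequence collapses to give $H^\bullet(SL)\cong \widetilde H^\bullet(\NN;\Z)$, and the second spectral sequence has
\[
E_1^{p,q} \;=\; \bigoplus_{\{a_0<\dots<a_p\}\subseteq \sigma} \widetilde H^{q}\bigl(\NN|_{S\setminus \{a_0,\dots,a_p\}};\Z\bigr),
\]
whose corner is the single summand $E_1^{s,j-1}=\widetilde H^{j-1}(\NN|_{S\setminus \sigma};\Z)\ne 0$.

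Outgoing differentials from $(s,j-1)$ vanish automatically because $E_1^{p,q}=0$ for $p>s$. For any $r'\ge 1$, an incoming differential from $E_{r'}^{s-r',j+r'-2}$ can only be nonzero if this source is nonzero; as a subquotient of $E_1^{s-r',j+r'-2}$, that would force $\widetilde H^{j+r'-2}(\NN|_{S\setminus \tau};\Z)\ne 0$ for some $\tau\subsetneq \sigma$ with $|\tau|=s-r'+1<|\sigma|$, and the pair $(\tau,j+r'-1)$ would contradict minimality of $|\sigma|$. All incoming differentials therefore vanish, so $E_\infty^{s,j-1}=E_1^{s,j-1}\ne 0$ and $\widetilde H^{s+j-1}(\NN;\Z)\ne 0$. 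Since $\emptyset\in \NN$, the pair $(\emptyset,s+j)$ then lies in our collection with $|\emptyset|=0<|\sigma|$, contradicting minimality a second time and completing the argument. The main obstacle I expect is the reduced versus unreduced cohomology bookkeeping: Section~\ref{s:vcdmeetsreg} sets up its double complex with unreduced $\bfC^q$, whereas~\eqref{eq:vcd} and~\eqref{eq:reg1} use reduced cohomology, so one must check carefully that augmenting in the $q$-direction still causes the first spectral sequence to collapse onto $\widetilde H^\bullet(\NN;\Z)$; once that is in place, the rest is routine diagram chasing.
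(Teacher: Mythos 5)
Your proof is correct and takes essentially the same route as the paper's: after translating both sides into nonvanishing cohomology of induced subcomplexes via \eqref{eq:vcd} and \eqref{eq:reg1}, it settles the nontrivial inequality with exactly the double complex \eqref{eq:double} and its two spectral sequences applied to the closed cover $\{\NN|_{S\setminus\{i\}}\}_{i\in\sigma}$ for a non-face $\sigma$. The only deviations are cosmetic --- reduced instead of unreduced cochains (your augmented analogue of Lemma~\ref{lem:horizontalCohomology} does hold, since the $q=-1$ row is the cochain complex of a simplex on $\sigma$), a minimal complement $\sigma$ in place of the paper's maximal $V'$, and an explicit universal-coefficient step that the paper leaves implicit.
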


\begin{proof}
In Section \ref{sec:preliminaries}, equations~\eqref{eq:vcd}
and~\eqref{eq:reg1} present interpretations of both invariants in
terms of the reduced simplicial cohomology of $\NN$, namely
\begin{align*}
\vcd W & =\max\{i:\widetilde{H}^{i-1}(\NN|_{S\setminus \sigma};\Z)\neq 0 {\mbox{ for some } \sigma\in \NN}\},\\
\reg \kk[\NN] &=\max\{i:\widetilde{H}^{i-1}(\NN|_{U};\kk)\neq 0 \mbox{ for some }U\subseteq S \}.
\end{align*}
Therefore the result is a consequence of the following claim.

\noindent
\textit{Claim}. Let $\D$ be a simplicial complex on
$V=\{0,\ldots ,n\}$ and $A$ be a ring. Then
\begin{equation}\label{e:cd=reg}
\begin{split}
& \max\{i:H^i(\D|_{V\setminus \sigma};A)\neq 0 \mbox{ for some
}\sigma\in\D\}= \qquad \qquad \qquad \\
& \qquad \qquad \qquad = \max\{i:H^i(\D|_{V'};A)\neq 0 \mbox{ for some
}V'\subseteq V\}.
\end{split}
\end{equation}
Clearly the left-hand side is less than or equal to the the right-hand
side. To see that equality holds, let $r$ be the maximum on the right
and choose $V'\subseteq V$ such that $H^r(\D|_{V'};A)\neq 0$. If
$V\setminus V' \in \D$ we have nothing to prove, so assume that
$V\setminus V' \notin \D$ (in particular $|V\setminus V'|\ge 2$). We can
(and will) also assume that $H^i(\D|_{U};A)=0$ for all $i\geq r$ and
$V'\subsetneq U\subseteq V$.

After a potential renumbering we can assume that
$V\setminus V'=\{0,\ldots ,s\}$.  For any $i\in \{0,\ldots ,s\}$, let
$\D^i=\D|_{V\setminus \{i\}}$ and consider the double complex defined
in~\eqref{eq:double}.
By Lemma~\ref{lem:horizontalCohomology} $(^I\!E_r^{p,q})$ stabilizes
at the second page and
\[
H^p(\cup_{i=0}^s\Delta^i;A)= \ ^I\!E^{p,0}_2= \ ^I\!E^{p,0}_{\infty}= \ ^I\!E^p.
\]
Since $\{0,\ldots ,s\}$ is not a face of~$\D$, we have
$\D=\cup_{i=0}^s\Delta^i$.  Now consider the spectral
sequence~$(^{II}\!E_r^{p,q})$.
From the maximality assumption on $V'$ it follows that
\[
H^r(\D|_{V'};A)=H^r(\D^{0,\ldots ,s};A)= \ ^{II}\!E^{r,s}_2.
\]
In particular, if $r'>r$ or $s'>s$, then $^{II}\!E^{r',s'}_2=0$, since
it is a subquotient of
\[\bigoplus_{0\leq a_0<\ldots <a_{s'}\leq s}H^{r'}(\D^{a_0,\ldots ,a_{s'}};A)=0.\]
We have
$^{II}\!E^{r,s}_2= \ ^{II}\!E^{r,s}_{\infty}= \ ^{II}\!E^{r+s}$ from
which we conclude that
\[
H^{r+s}(\D;A)= \ ^I\!E^{r+s}= \ ^{II}\!E^{r+s}= H^r(\D|_{V'};A)\neq 0.
\]
Since $s>0$ (because $|V\setminus V'|\ge 2$), we obtain a contradiction to the maximality of $r$ and~$V'$.
\end{proof}

\section{Arbitrary large regularity with property $N_{p}$}
\label{s:InductiveConstruction}

We now prove Theorem~\ref{t:generalThmFirstStatement}.  To this end,
for each $k$-large simplicial complex $\Delta$ we construct a
$k$-large simplicial complex $\ritter(\Delta,k)$ such that, in
characteristic zero,
$\reg\kk[\ritter(\Delta,k)] = \reg\kk[\Delta] + 1$
(Lemma~\ref{l:regularityUp}).  This uses a construction based on a
detour through geometric group theory and is inspired by the work of
Osajda~\cite[Section~4]{osajda2010construction}.

We need to make a few definitions.  
The first turns a cell complex into a simplicial complex.
\begin{definition}\label{def:thickening}
The \emph{thickening} of a convex cell complex $\P$ is the simplicial
complex $\Th(\P)$, with the same vertex set as $\P$, obtained by turning
all cells into simplices. In particular, $\{v_1,\dots,v_s\}$ is a face
of $\Th(\P)$, if there is a face of $\P$ that contains $\{v_1,\dots,v_s\}$.
\end{definition}

\begin{example}
The thickening of the $d$-dimensional cube is the $(2^d-1)$-simplex.
\end{example}
The thickening induces a distance between the vertices of a convex
cell complex that counts the minimal number of maximal cells one needs
to pass to get from one vertex to another. Namely, for two vertices
$v,w\in \P$, the distance $d(v,w)$ is the length of a shortest path
connecting $v$ and $w$ in the 1-skeleton of the thickening $\Th(\P)$.

A step in our construction is taking a finite quotient of an infinite
cubical complex. We clarify here how this is intended.  Let $G$ be a
group acting on the vertex set $V(\P)$ of a convex cell complex $\P$
such that for every face $F=\{v_1,\dots,v_k\} \in \P$ and every
$g\in G$ we have
\[g\cdot F = \{g\cdot v_1,\dots,g\cdot v_k\} \in \P.\]
This induces an action of $G$ on $\P$. The \emph{displacement} of the
action of $G$ on $\P$ is the minimum distance between the elements in
the orbit of a vertex. We can take the quotient $\P/G$, which is in
general only a set.
\begin{remark}\label{r:quotientComplex}
If the displacement of the action is at least $2$, then $\P/G$ is a
poset with the inclusion given by $\widehat{F'}\subseteq \widehat{F}$
if there exists $g\in G$ such that $g\cdot F' \subseteq F$. If the
displacement of the action is at least $3$, then $\P/G$ is a convex
cell complex.
\end{remark}
An example of such a group action is that of the subgroup of some
Coxeter group on the vertices of the Davis complex. In this case the
displacement of the action coincides with the displacement of the
subgroup as defined below.
\begin{definition}
Let $W$ be a Coxeter group. The \emph{displacement} of an element
$w\in W$ is the distance $d(e,w)$ of $w$ to the identity in the
(1-skeleton of the) thickening~$\Th(\Sigma)$.  The \emph{displacement
of a subgroup $H \subset W$} is the minimal displacement among its
nontrivial elements.
\end{definition}

Let $\Delta$ be a $k$-large simplicial complex for an integer
$k\ge 4$.  We introduce an iterative construction which produces a new
$k$-large simplicial complex $\ritter(\Delta,k)$.  It works as
follows.
\begin{enumerate}
\item Let $W$ be the right-angled Coxeter group with nerve~$\Delta$.
\item Let $\Sigma$ be the Davis complex of $W$.
\item Let $Y = \Th(\Sigma)$ be the thickening of $\Sigma$.
\item \label{it:pick} Pick a torsion-free finite index subgroup
$H\subset W$ with displacement at least~$k$.
\item \label{it:quotient} Let $\ritter(\Delta,k)$ be the
quotient~$Y/H$.
\end{enumerate}

Since $\Delta$ is flag, there is a right-angled Coxeter group
$\Ww(\D)$ as described in
Remark~\ref{r:flagRight-angledCorrespondence}.
The group $H$ in~\ref{it:pick} exists because $W$ is virtually torsion
free~\cite[Corollary~D.1.4]{davis2008geometry} and residually
finite~\cite[Section~14.1]{davis2008geometry}.  In
Section~\ref{sec:bound-numb-vert}, we take a constructive approach and
find a concrete $H$ using representations of $W$ in $\GL_n(\Z)$.  The
resulting complex $\ritter(\Delta,k)$ evidently depends on the choice
of $H$ in step~\ref{it:pick}.  However, the desired properties of
$\ritter(\Delta,k)$, such as Lemma~\ref{l:regularityUp}, do not depend
on this choice.
\begin{lemma}\label{l:faceVSthick} In the above situation,
$\face{\D} = \lk_{\ritter(\Delta,k)} v$ for any vertex~$v$.
\end{lemma}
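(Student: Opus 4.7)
The plan is to compute the link of a vertex of $\ritter(\Delta,k)=Y/H$ in two stages: first identify $\lk_Y(w)$ with $\face{\Delta}$ for any vertex $w$ of $Y$, then argue that the quotient by $H$ does not collapse anything in a neighborhood of $w$ because the displacement of $H$ is at least $k\ge 4$.

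First I would exploit the left multiplication action of $W$ on $\Sigma$: vertices of $\Sigma$ are elements of $W$ and $W$ acts transitively on them by isometries of $\Th(\Sigma)$, so all vertex links in $Y$ are isomorphic and it suffices to compute $\lk_Y(e)$. The cells of $\Sigma$ containing $e$ are precisely the spherical subgroups $W_T$ with $T\in\Delta$; by Remark~\ref{r:flagRight-angledCorrespondence} each such $W_T$ is isomorphic to $(\Z/2\Z)^{|T|}$ and is the vertex set of a $|T|$-dimensional cube in $\Sigma$, which becomes a $(2^{|T|}-1)$-simplex after thickening.

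The key combinatorial identification is the following. By the remark on reduced presentations of right-angled Coxeter groups, each nonidentity spherical element $u\in W$ has a unique reduced expression as a product of pairwise commuting distinct generators, and the support map $u\mapsto \mathrm{supp}(u)\subseteq S$ is a bijection between nonidentity spherical elements and nonempty faces of $\Delta$. A subset $\{u_1,\ldots,u_s\}$ is a face of $\lk_Y(e)$ iff $\{e,u_1,\ldots,u_s\}$ lies in a single thickened cube of $Y$, iff there is a spherical $T$ with $u_1,\ldots,u_s\in W_T$, iff $\mathrm{supp}(u_1)\cup\cdots\cup\mathrm{supp}(u_s)\subseteq T$, iff the supports $\mathrm{supp}(u_i)$ are all contained in a common face of $\Delta$. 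This is exactly the defining condition for a face of $\face{\Delta}$ (Definition~\ref{d:facecomplex}), so $\lk_Y(e)\cong\face{\Delta}$.

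Finally, I would verify that the projection $Y\to Y/H$ induces an isomorphism of the closed star of any lift $w$ onto the closed star of $v=wH$. Since left multiplication is an isometry of the 1-skeleton of $\Th(\Sigma)$, for every $e\ne h\in H$ we have $d(w,hw)=d(e,h)\ge k\ge 4$. Any two vertices in the closed star of $w$ are at distance $\le 2$ in $\Th(\Sigma)$, so no two of them are $H$-equivalent, and more generally no two distinct simplices of the star of $w$ can be $H$-related (such a relation would force two vertices in a single simplex, distance $\le 1$ apart, to be $H$-equivalent). Hence the star, and therefore the link, is preserved by the quotient, giving $\lk_{\ritter(\Delta,k)}v=\lk_Y(w)=\face{\Delta}$.

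The main obstacle is the middle step: unwinding the definition of the thickening together with the structure of spherical cosets to match the "common face of $\Delta$" condition of the face complex. The displacement argument is routine once the action of $W$ on $Y$ is recognized as isometric.
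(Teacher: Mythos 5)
Your proof is correct and takes essentially the same route as the paper: the core step is identifying the vertex link in the thickening of the Davis complex with $\face{\Delta}$, which you do by unravelling spherical cosets and supports at the identity vertex, while the paper invokes the general identity $\face{\lk_\Sigma v}=\lk_{\Th(\Sigma)}v$ for cubical complexes together with Remark~\ref{r:LinkNerve}. Your explicit displacement argument showing that the quotient by $H$ preserves closed stars (hence links) is a point the paper leaves implicit, and it is carried out correctly.
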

\begin{proof}
After unraveling definitions, it is visible that if $\Sigma$ is a
cubical complex and $v\in \Sigma$ is a vertex, then
$\face{\lk_\Sigma v} = \lk_{\Th(\Sigma)} v$.  If $\Sigma$ is the Davis
complex of a right-angled Coxeter group with nerve $\Delta$, then by
Remark~\ref{r:LinkNerve}, $\lk_{\Sigma}v = \Delta$ for any vertex
$v\in \Sigma$.
\end{proof}

\begin{lemma}\label{l:regularityUp} If $\kk$ is a field of
characteristic zero and $k\ge 4$, then
$\reg \kk[\ritter(\Delta,k)] = \reg\kk[\D]+1$.
\end{lemma}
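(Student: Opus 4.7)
The proof naturally splits into matching upper and lower bounds, the first of which is a direct consequence of earlier results in the paper while the second is the serious one.

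For the upper bound $\reg \kk[\ritter(\Delta,k)] \leq \reg \kk[\Delta] + 1$, I apply Proposition \ref{p:reglink} to the finite simplicial complex $\ritter(\Delta,k)$: some vertex $v$ satisfies $\reg \kk[\lk_{\ritter(\Delta,k)} v] \geq \reg \kk[\ritter(\Delta,k)] - 1$. By Lemma \ref{l:faceVSthick} this link equals $\face{\Delta}$, and by Proposition \ref{p:regFaceEqual} we have $\reg \kk[\face{\Delta}] = \reg \kk[\Delta]$. Chaining these three facts gives the bound.

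For the lower bound, set $r = \reg \kk[\Delta]$ and apply Hochster's formula \eqref{eq:reg2} at $\sigma = \emptyset$ (for which $\lk_{\ritter(\Delta,k)} \emptyset = \ritter(\Delta,k)$), which reduces the task to exhibiting a nontrivial class in $\widetilde{H}^r(\ritter(\Delta,k); \kk)$. The Davis complex $\Sigma$ is contractible \cite{davis2008geometry}, and the thickening $Y = \Th(\Sigma)$ is homotopy equivalent to $\Sigma$ (the simplex and the cube on the same vertex set are both contractible cells), hence $Y$ is contractible too. The displacement lower bound of $k \geq 4$ together with the torsion-freeness of $H$ forces the $H$-action on $Y$ to be free and properly discontinuous (any element fixing a simplex would have displacement bounded by the diameter of that simplex, contradicting $\ge k$). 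Consequently $\ritter(\Delta,k) = Y/H$ is a finite $K(H,1)$, giving $\widetilde{H}^r(\ritter(\Delta,k); \kk) \cong H^r(H; \kk)$, so the task is to prove $H^r(H; \kk) \neq 0$.

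The plan for this last nonvanishing is to use Davis's decomposition \cite{davis2008geometry} of the compactly supported cohomology
\[
H^*_c(\Sigma; \kk) \cong \bigoplus_{w \in W} \widetilde{H}^{*-1}(\NN \setminus \sigma_w; \kk),
\]
as a $\kk W$-module, which together with the claim proved inside Theorem \ref{t:cd=reg} identifies $\cd_\kk H$ with $r$ and pinpoints a witness subcomplex $\NN|_{V'}$ such that $\widetilde{H}^{r-1}(\NN|_{V'}; \kk) \neq 0$. Via the isomorphism $H^*(H; \kk H) \cong H^*_c(\Sigma; \kk)$, this class lies in the top group cohomology with coefficients in $\kk H$. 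The final step consists in transferring this class to trivial coefficients: in characteristic zero, the finite quotient $W/H'$ (where $H' \triangleleft W$ is a normal finite-index torsion-free refinement of $H$, still of displacement $\geq k$) can be exploited via averaging, since $|W/H'|$ is invertible in $\kk$; combined with the $W$-equivariant geometry of $Y$ and its fundamental domain (a cone on $\Delta$), this produces the required nonzero cocycle on $\ritter(\Delta,k)$.

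The chief technical obstacle is precisely this passage from $\cd_\kk H = r$ to $H^r(H; \kk) \neq 0$: cohomological dimension in general is detected by nontrivial coefficient modules rather than by $\kk$ itself. The characteristic-zero hypothesis is indispensable, as it is what makes the finite-index averaging valid and allows the witness class from the induced subcomplex $\NN|_{V'}$ to survive in the trivial-coefficient group cohomology of $H$.
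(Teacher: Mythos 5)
Your upper bound is exactly the paper's argument: chain Lemma~\ref{l:faceVSthick}, Proposition~\ref{p:reglink} and Proposition~\ref{p:regFaceEqual}. No issues there.

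The lower bound, however, contains a genuine gap, and it is precisely the step you yourself flag as the ``chief technical obstacle'' and then do not carry out. From Davis's formula for $H^*_c(\Sigma;\kk)$ and the isomorphism $H^*(H;\kk H)\cong H^*_c(\Sigma;\kk)$ you can indeed conclude $H^r(H;\kk H)\neq 0$, i.e.\ $\cd_\kk H=r$; but the statement you actually need on your route is $H^r(H;\kk)\neq 0$ with \emph{trivial} coefficients, and no formal argument bridges the two. The proposed averaging over a finite quotient $W/H'$ cannot do it: with $\chara\kk=0$ one has $H^r(W;\kk)=0$ for all $r>0$ (the stabilizers are finite and the orbifold quotient is the contractible Davis chamber), so transfer from $W$, or projecting onto $W/H'$-invariants, kills classes rather than producing them. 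Moreover, your reduction aims at a statement stronger than what Hochster's formula requires and quite possibly false for an arbitrary admissible $H$: you insist on nonvanishing of $\widetilde{H}^r$ of the \emph{whole} complex $\ritter(\Delta,k)$, i.e.\ of the top trivial-coefficient cohomology of the aspherical quotient, which is exactly the kind of group that can vanish for orientability-type reasons (already for $\D$ a pentagon, a torsion-free finite-index $H$ may give a non-orientable surface quotient, with $H^2(H;\Q)=0$). The paper avoids both problems: it never passes through group cohomology with trivial coefficients, but instead adapts Osajda's explicit cocycle construction, mapping relative cocycles in $Z^r(K,K^{S\setminus\sigma};\kk)$ (where $\sigma\in\D$ witnesses $\widetilde{H}^{r-1}(\D\setminus\sigma;\kk)\neq 0$, using the claim from Theorem~\ref{t:cd=reg}) to cocycles on the complement in $X=\Sigma/H$ of the orbit of $\sigma$, proving injectivity on $H^r$, and thereby exhibiting a nontrivial class on the \emph{induced subcomplex} $\ritter(\D,k)_B$; Hochster's formula applied to that subcomplex then gives $\reg\kk[\ritter(\Delta,k)]\ge r+1$. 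Some concrete input of this kind (an explicit cocycle, or a duality/orientation argument, or the passage to an induced subcomplex rather than the full complex) is missing from your sketch, so as written the inequality $\reg\kk[\ritter(\Delta,k)]\ge\reg\kk[\D]+1$ is not established.
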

\begin{proof}
By the previous lemma $\face{\D} = \lk_{\ritter(\Delta,k)} v$ for
any vertex $v$, and by Proposition~\ref{p:reglink} there exists a
vertex $v$ such that
$\reg \kk[\lk_{\ritter(\Delta,k)}v] \ge \reg
\kk[\ritter(\Delta,k)]-1$.  Since
$\reg \kk[\face{\D}]= \reg\kk[\D]$ by
Proposition~\ref{p:regFaceEqual}, it follows that
$\reg \kk[\ritter(\Delta,k)] \leq \reg\kk[\D] + 1$.

To show $\reg \kk[\ritter(\Delta,k)] \ge \reg\kk[\D]+1$, let
$X=\Sigma/H$.  Then $\ritter(\Delta,k)$ is the thickening of~$X$. By
Hochster's formula for graded Betti numbers and~\eqref{e:cd=reg}, we
have that
\[
\reg \kk[\D]=\max\{i:\widetilde{H}^{i-1}(\D\setminus \sigma;\kk)\neq
0 \text{ for some }\sigma\in\D\}.
\]
Let $r=\reg \kk[\D]$, and fix $\sigma\in\D$ for which
$\widetilde{H}^{r-1}(\D\setminus \sigma;\kk)\neq 0$.  From now on the
argument goes on the same lines of the proof leading to \cite[Lemma
4.5]{osajda2013combinatorial}.  With the same notation used there,
$\D\setminus \sigma$ deformation retracts onto
$K^{S\setminus \sigma}$, where $K$ is the subcomplex of $\Sigma$
induced by the spherical words (including the identity) and, for any
subset of generators $T\subseteq S$, $K^T$ is the subcomplex induced
by the spherical words containing some element of $T$.  So we have
\[
\widetilde{H}^{r}(K,K^{S\setminus \sigma};\kk)\neq 0.
\]
Osajda produces a map of $\kk$-vector spaces from the cocycles
$Z^r( K,K^{S};\kk)$ to the cocycles $Z^r(X;\kk)$.  This uses the
assumption $\chara(\kk) = 0$.  One can check that the same rule
defines a map of $\kk$-vector spaces
$Z^r( K,K^{S\setminus \sigma};\kk) \to Z^r(X\setminus A;\kk)$, where
$A=\{\widehat{w\sigma}:w\in W\}$ and $\widehat{w\sigma}$ is the class
in $X$ of $w\sigma\in\Sigma$.  By the same argument used in
\cite[Lemma 4.5]{osajda2013combinatorial}, the above map induces an
injection
\[
\widetilde{H}^r( K,K^{S\setminus \sigma};\kk)\hookrightarrow
\widetilde{H}^r(X\setminus A;\kk),
\]
in particular $\widetilde{H}^r(X\setminus A;\kk)$ is not zero.  By
\cite[Lemma~70.1]{munkres1984elements},
$\widetilde{H}^k(X\setminus A;\kk)\cong \widetilde{H}^r(X_B;\kk)$,
where $B$ are the vertices of $X$ which are not in $\widehat{w\sigma}$ for
any $w\in W$. Finally, the thickening of $X_B$ is exactly
$\ritter(\D,k)_B$, so
\[\widetilde{H}^r(\ritter(\D,k)_B;\kk) \neq 0.\]
By Hochster's formula for graded Betti numbers
$\reg \kk[\ritter(\Delta,k)]\geq r+1$.
\end{proof}
 
\begin{remark}
In the definition of cohomological dimension, $\ZZ$ could be replaced
by a field $\kk$ of characteristic zero.  The resulting notion of {\it
virtual rational cohomological dimension} $\vcd_\Q W$ of a virtually
torsion free group $W$ does not depend on the choice of the field.
This notion however, differs from virtual cohomological dimension.
Lemma~\ref{l:regularityUp}, together with Hochster's formula for
graded Betti numbers and~\eqref{e:cd=reg}, implies that
\[\vcd_\Q \Ww(\ritter(\Delta,k)) = \vcd_\Q \Ww(\D)+1.\]
This conclusion for $\vcd$ does not follow from
Lemma~\ref{l:regularityUp} because of the assumptions on~$\kk$.
\end{remark}

\begin{lemma}\label{l:locallyklarge}
If a cubical complex is locally $k$-large, then its thickening is
locally $k$-large.
\end{lemma}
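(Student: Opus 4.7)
The plan is to reduce the lemma to the combinatorial claim that the face complex of a $k$-large simplicial complex is $k$-large, via the identity
\[
\lk_{\Th(\P)}v = \face{\lk_\P v}
\]
for every vertex $v$ of a cubical complex $\P$. This identity comes from unraveling definitions: a set of vertices $\{w_1,\dots,w_s\}$ spans a simplex in $\lk_{\Th(\P)}v$ iff $\{v,w_1,\dots,w_s\}$ lies in a common cube of $\P$, which under the standard bijection between cubes of $\P$ through $v$ and faces of $\lk_\P v$ (a cube $Q\ni v$ corresponding to the face of $\lk_\P v$ spanned by the edges of $Q$ through $v$) becomes exactly the defining condition of $\face{\lk_\P v}$.

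Flagness of $\face{\D}$ is then direct: if $F_1,\dots,F_s\in\D$ pairwise span edges in $\face{\D}$, so each $F_i\cup F_j\in\D$, then any two vertices of $V=\bigcup_i F_i$ lie in one of these pairwise unions and hence form an edge of $\D$; flagness of $\D$ promotes $V$ to a face of $\D$, so $\{F_1,\dots,F_s\}$ is a face of $\face{\D}$.

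For the no-short-cycle condition, suppose toward contradiction that $F_1,\dots,F_j$ is an induced $j$-cycle in $\face{\D}$ with $4\le j<k$. The cycle structure means $F_i\cup F_{i+1}\in\D$ for each $i\pmod{j}$, while $F_i\cup F_{i'}\notin\D$ for non-adjacent $i,i'$. The strategy is to choose representatives $v_i\in F_i$ that collectively form an induced $j$-cycle in the 1-skeleton of $\D$, contradicting $k$-largeness. Consecutive $v_i,v_{i+1}$ automatically span edges of $\D$ (being in $F_i\cup F_{i+1}$), and for each non-adjacent pair $(i,i')$ flagness of $\D$ applied to the non-face $F_i\cup F_{i'}$ furnishes a witness non-edge $\{a,b\}$ with $a\in F_i\setminus F_{i'}$ and $b\in F_{i'}\setminus F_i$. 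The main obstacle will be selecting a single $v_i\in F_i$ for each $i$ that simultaneously serves as the correct side of every non-edge witness across all non-adjacent pairs; in the base case $j=4$ this can be done by a direct case analysis that rules out each potential collision among the four witnesses using the adjacency unions $F_i\cup F_{i+1}\in\D$, and for larger $j$ one may either iterate the case analysis or pass to a counterexample minimizing $\sum_i|F_i|$ and argue that minimality forces each $F_i$ to be a single vertex, at which point the induced cycle in $\face{\D}$ descends directly to an induced $j$-cycle in $\D$ with $j<k$, completing the contradiction.
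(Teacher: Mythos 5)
Your reduction is exactly the paper's: the identity $\lk_{\Th(\P)}v=\face{\lk_\P v}$ for vertex links of a cubical complex reduces the lemma to the claim that the face complex of a $k$-large simplicial complex is $k$-large, and your verification of flagness of $\face{\D}$ is complete and correct. The difference is that at this point the paper simply invokes Haglund's result (Proposition B.1 in the appendix it cites), whereas you try to prove it; if carried out, that would be a nice self-contained variant, but as written the crucial half is not carried out.

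The gap is in the no-short-induced-cycle part, which is precisely the nontrivial content of Haglund's proposition. Your first strategy, picking one representative $v_i\in F_i$ for all $i$ simultaneously, genuinely fails in general: flagness gives a witness non-edge for each non-adjacent pair $(F_i,F_{i'})$ separately, but a single $v_i$ serving all pairs at once need not exist, consecutive faces may share vertices (collapsing the cycle), and ``iterate the case analysis'' is not an argument. Your fallback, a counterexample minimizing $\sum_i|F_i|$ with the assertion that minimality forces every $F_i$ to be a singleton, is stated without proof, and the point is not routine: if you shrink some $F_i$ to a vertex $v$, the vertex $\{v\}$ of $\face{\D}$ may become adjacent to a face that is not a cycle-neighbour of $F_i$, so the same-length cycle through $\{v\}$ need not be induced; to contradict minimality you must extract a shorter induced cycle through $\{v\}$ and guarantee it still has length at least four (a chord in general only produces triangles, which give no contradiction), which forces a careful choice of $v$ --- e.g.\ a vertex of $F_i$ with a non-neighbour in $F_{i+2}$, so that $\{v\}\cup F_{i+2}\notin\D$ --- plus degeneracy checks such as $\{v\}\neq F_{i\pm1}$. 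The argument can indeed be completed along these lines (either replacing $F_1$ by $\{a\}$, where $\{a\}\cup F_3\notin\D$, keeps an induced cycle of the same length and smaller weight, or the least $i_0\ge 4$ with $\{a\}\cup F_{i_0}\in\D$ yields the induced cycle $\{a\},F_2,\dots,F_{i_0}$ of length in $[4,k)$ and smaller weight), but this step, which is exactly what the paper outsources to the cited result, is missing from your proposal.
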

\begin{proof}
Let $\Sigma$ be a locally $k$-large cubical complex.  As in the proof of
Lemma~\ref{l:faceVSthick}, each vertex link $\lk_{\Th(\Sigma)} v$ is equal
to $\face{\lk_\Sigma v}$.  By a result of Haglund, a simplicial complex is
$k$-large if and only if its face complex is
$k$-large~\cite[Proposition~B.1]{januszkiewicz10:_non}.
\end{proof}
A proof of the Lemma~\ref{l:locallyklarge} also appears
in~\cite[Lemma~6.7]{osajda2013combinatorial}.

\begin{lemma}\label{l:thickeningklarge}
Let $\Sigma$ be the Davis complex of $\Ww(\Delta)$, where $\Delta$ is
$k$-large for $k\ge 4$.  Then $\Th(\Sigma)$ is $k$-large.
\end{lemma}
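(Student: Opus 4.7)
The plan is to apply the theorem of Januszkiewicz and \swiatkowski that a simply connected, locally $k$-large simplicial complex is $k$-large (in fact, $k$-systolic). I must therefore verify both hypotheses for $\Th(\Sigma)$.

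For local $k$-largeness, by Remark~\ref{r:LinkNerve} the link of every vertex of the Davis complex $\Sigma$ of $\Ww(\Delta)$ is isomorphic to the nerve $\Delta$, which is $k$-large by assumption. Hence $\Sigma$ is locally $k$-large as a cubical complex, and Lemma~\ref{l:locallyklarge} immediately propagates this to the thickening $\Th(\Sigma)$.

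For simple connectivity, I use that $\Delta$ is flag (being $k$-large with $k \geq 4$), so by Gromov's link condition $\Sigma$ is a CAT(0) cube complex, and in particular contractible. One then argues that $|\Sigma|$ and $|\Th(\Sigma)|$ are homotopy equivalent: the two CW structures share a vertex set, and for each cube $Q$ of $\Sigma$ the cube itself and the corresponding simplex of $\Th(\Sigma)$ on the vertices of $Q$ are both contractible and attached along matching face data on the vertex set of $Q$, so a cell-by-cell deformation retraction between the realizations can be assembled. It follows that $\pi_1(\Th(\Sigma)) = \pi_1(\Sigma) = 0$, and $\Th(\Sigma)$ is simply connected.

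The main obstacle is the comparison of realizations above. Although a $d$-cube and the corresponding $(2^d-1)$-simplex agree on vertices and are both contractible, the intermediate faces are genuinely different (the cube has only cubical subfaces, while the simplex contains all subsets of its vertex set). One may either bypass this by citing the standard literature on systolic developments of cube complexes, or argue directly by induction on skeleta, filling each loop of $\Th(\Sigma)^{(1)}$ by first homotoping it into $\Sigma^{(1)}$ using the $2$-simplices of $\Th(\Sigma)$ supplied by the squares of $\Sigma$, and then using the simple connectivity of $\Sigma$ itself.
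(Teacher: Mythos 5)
Your proposal is correct and follows essentially the same route as the paper: vertex links of $\Sigma$ are $\Delta$ (Remark~\ref{r:LinkNerve}), so Lemma~\ref{l:locallyklarge} gives local $k$-largeness of $\Th(\Sigma)$, and simple connectivity plus the Januszkiewicz--\swi local-to-global criterion (\cite[Corollary~1.5]{JS-non-positive-curvature}) finishes the argument. The only difference is that the paper simply asserts that $\Sigma$ is a deformation retract of $\Th(\Sigma)$ (hence $\Th(\Sigma)$ is simply connected, $\Sigma$ being contractible), whereas you elaborate on why the two realizations are homotopy equivalent; your skeletal filling argument is a legitimate way to make that point precise.
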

\begin{proof}
The Davis complex $\Sigma$ is a deformation retract of its thickening
$\Th(\Sigma)$ and in particular has the same homotopy type.  Therefore
$\Th(\Sigma)$ is simply connected.  By Lemma~\ref{l:locallyklarge}
$\Th(\Sigma)$ is locally $k$-large.  According to
\cite[Corollary~1.5]{JS-non-positive-curvature}, a simplicial complex
is $k$-large if and only if all links are $k$-large and the systole
(the length of the shortest non-contractible loop in the complex) is
at least~$k$.  Since there are no non-contractible loops, the proof is
complete.
\end{proof}

When forming the quotient of the thickening of the Davis complex
modulo the finite-index torsion-free subgroup $H\subset W$ in
step~\ref{it:quotient} of the construction, cycles are created.  The
quotient by a group of displacement $k$ creates cycles of length~$k$.
By Remark \ref{r:quotientComplex}, $k \ge 4$ implies the quotient is
simplicial complex.

\begin{lemma}\label{l:QuotDisplace}
Let $\Sigma$ be the Davis complex of $\Ww(\Delta)$, where $\Delta$ is
$k$-large for $k\ge 4$.  If $H\subset \Ww(\Delta)$ is a torsion-free
subgroup of displacement at least~$k$, then $\Th(\Sigma)/H$ is
$k$-large.
\end{lemma}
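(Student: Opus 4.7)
The plan is to invoke the Januszkiewicz--\swi criterion \cite[Corollary~1.5]{JS-non-positive-curvature} already used in the proof of Lemma~\ref{l:thickeningklarge}: a simplicial complex is $k$-large if and only if all its vertex links are $k$-large and its systole (the length of the shortest non-contractible loop) is at least~$k$. Since $k\ge 4 > 3$, Remark~\ref{r:quotientComplex} first guarantees that $\Th(\Sigma)/H$ is a convex cell complex, and it is in fact simplicial because $\Th(\Sigma)$ is.

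For local $k$-largeness, I would show that because the $H$-action has displacement $\ge 3$, the quotient map $\pi\colon \Th(\Sigma)\to \Th(\Sigma)/H$ restricts to a bijection on the closed star of each vertex $v$: otherwise some edge containing $v$ would be carried by a nontrivial element of $H$ onto an adjacent edge, contradicting the displacement bound. Hence
\[
\lk_{\Th(\Sigma)/H}\pi(v) \;\cong\; \lk_{\Th(\Sigma)}v
\]
for every vertex $v$, and the right-hand side is $k$-large because $\Th(\Sigma)$ is $k$-large (Lemma~\ref{l:thickeningklarge}) and vertex links of $k$-large complexes are themselves $k$-large.

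The systole bound exploits that $\Th(\Sigma)$ is simply connected (shown inside the proof of Lemma~\ref{l:thickeningklarge}), together with the fact that $H$ is torsion-free and hence acts freely and properly discontinuously on $\Th(\Sigma)$, making $\pi$ a covering map. Any non-contractible simplicial loop in $\Th(\Sigma)/H$ of edge-length $\ell$ lifts to an edge-path of length $\ell$ in the $1$-skeleton of $\Th(\Sigma)$ from some vertex $v$ to $h\cdot v$ with $h\in H\setminus\{e\}$, so $\ell\ge d(v,h\cdot v)\ge k$ by the displacement hypothesis. Combining local $k$-largeness with systole $\ge k$ then yields $k$-largeness of $\Th(\Sigma)/H$ via the Januszkiewicz--\swi criterion.

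The main technical obstacle is cleanly bridging the stated definition of displacement of $H$, namely $\min_{h\in H\setminus\{e\}}d(e,h)$, with the uniform bound $d(v,h\cdot v)\ge k$ required by the systole argument. Since $W$ acts by graph isometries on the $1$-skeleton of $\Th(\Sigma)$, one has $d(v,h\cdot v)=d(e,v^{-1}hv)$, so what is actually needed is that every nontrivial conjugate of an element of $H$ also has displacement $\ge k$. This is automatic when $H$ is normal in $W$, which is the case in the explicit constructions that follow, where $H$ is realized as the kernel of a linear representation of~$W$.
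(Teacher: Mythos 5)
Your proposal is correct in substance, but it takes a genuinely different (and heavier) route than the paper, whose proof is essentially two lines: if $C\subset \Th(\Sigma)/H$ were a cycle of length $l<k$, then, because orbit points are at distance at least $k$, the cycle meets each $H$-orbit at most once and lifts to a cycle of length $l$ in $\Th(\Sigma)$, contradicting Lemma~\ref{l:thickeningklarge}; no covering-space theory is invoked, the Januszkiewicz--\swi criterion is not reused for the quotient, and torsion-freeness of $H$ plays no role in that argument. Your route instead verifies the two halves of \cite[Corollary~1.5]{JS-non-positive-curvature} for the quotient: local $k$-largeness via injectivity of the quotient map on closed stars (so $\lk_{\Th(\Sigma)/H}\pi(v)\cong\lk_{\Th(\Sigma)}v$, which is $k$-large, cf.\ Lemmas~\ref{l:faceVSthick} and~\ref{l:locallyklarge}), and the systole bound by lifting non-contractible loops to the simply connected $\Th(\Sigma)$, where the deck transformation gives length at least $d(v,hv)\ge k$. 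What this buys is transparency — in particular flagness of the quotient is handled explicitly through the criterion, whereas the paper's cycle-lifting leaves the flag condition implicit — at the cost of needing the free action/covering structure (this is where you use torsion-freeness). As for the caveat you raise: both proofs really use the displacement of the \emph{action}, i.e.\ $d(v,hv)\ge k$ for all vertices $v$ and all $e\neq h\in H$ (the paper's "consists of disjoint orbits" step needs exactly this), and the paper covers it by its assertion, just after Remark~\ref{r:quotientComplex}, that for a subgroup acting on the Davis complex the displacement of the action coincides with the displacement of the subgroup. Your observation that $d(v,hv)=d(e,v^{-1}hv)$, so that conjugation-invariance (for instance normality of $H$, which indeed holds for the subgroups $H(m)$ used in Section~\ref{sec:bound-numb-vert}) is what makes this harmless, is a legitimate point of care rather than a gap in comparison with the paper; just note that, as written, your argument proves the lemma under the action-displacement (or normal-subgroup) reading of the hypothesis, which is also the reading the paper's own proof tacitly adopts.
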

\begin{proof}
If $C \in \Th(\Sigma)/H$ is a cycle of length $l < k$, then it
consists of disjoint orbits and thus there is a cycle of length $l$ in
$\Th(\Sigma)$.  This is impossible since by
Lemma~\ref{l:thickeningklarge} $\Th(\Sigma)$ is $k$-large.
\end{proof}

We are now ready to prove the two main results of this section.
\begin{theorem}\label{t:generalThmFirstStatement}
Let $I=I_{\D}\subseteq R = \kk[x_1,\dots,x_n]$ be a square-free
quadratic monomial ideal. If the characteristic of $\kk$ is zero, then
there exists a positive integer $N$ and a square-free monomial ideal
\mbox{$I'=I_{\D'}\subseteq R' = \kk[y_1,\dots,y_N]$} such that:
\begin{enumerate}[label=\textup{\roman*)},ref=\roman*]
\item $\reg R'/I'= \reg R/I+1$;
\item $\ind R'/I'=\ind R/I$;
\item For each vertex $v$ of $\D'$, $\lk_{\D'}v=\face{\D}$.
\end{enumerate}
\end{theorem}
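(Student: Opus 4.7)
The plan is to set $\D' := \ritter(\D, p+3)$ with $p := \ind R/I$, and then verify the three conclusions using the lemmas already established in this section. Since $I$ is square-free and quadratic, $\D$ is flag, so $p \ge 1$ and $k := p+3 \ge 4$ is in range for the $\ritter(-,k)$ construction. Let $N$ be the number of vertices of $\D'$, and set $R' = \kk[y_1,\dots,y_N]$ and $I' = I_{\D'}$.

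Conclusions (i) and (iii) are then immediate from the groundwork: (i) is exactly Lemma~\ref{l:regularityUp} (using $\chara \kk = 0$), and (iii) is exactly Lemma~\ref{l:faceVSthick}. For (ii), one inequality is easy: by Lemma~\ref{l:QuotDisplace}, $\D'$ is $(p+3)$-large, so by the combinatorial characterization of $N_p$ for Stanley--Reisner rings recalled in Section~\ref{sec:preliminaries}, $\ind R'/I' \ge p$.

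The main content is the reverse inequality in (ii). Since $\ind R/I = p$, the flag complex $\D$ is $(p+3)$-large but not $(p+4)$-large, hence contains an induced $(p+3)$-cycle. By Haglund's result (invoked in the proof of Lemma~\ref{l:locallyklarge}), $\face{\D}$ is likewise not $(p+4)$-large, and so by~(iii) the link $\lk_{\D'} v$ of every vertex $v \in \D'$ contains an induced $(p+3)$-cycle. To finish, I promote such a cycle to an induced cycle in $\D'$ itself: for any two vertices $u, w \in \lk_{\D'}v$, flagness of $\D'$ (a consequence of being $(p+3)$-large) gives
\[
\{u,w\} \in \D'
\ \Longleftrightarrow\
\{u,v,w\} \in \D'
\ \Longleftrightarrow\
\{u,w\} \in \lk_{\D'} v,
\]
so no chord appears when passing from the link to the ambient complex, and the induced $(p+3)$-cycle of $\lk_{\D'}v$ lifts to an induced $(p+3)$-cycle of $\D'$. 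Thus $\D'$ is not $(p+4)$-large, and $\ind R'/I' = p$.

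The main obstacle, once the $\ritter$ construction and its three basic properties are in hand, is precisely this last step: certifying that the Green--Lazarsfeld index does not accidentally jump up. The argument hinges on the interplay of three ingredients --- Haglund's compatibility between $k$-largeness of $\D$ and of $\face{\D}$, the identification $\lk_{\D'} v = \face{\D}$ from Lemma~\ref{l:faceVSthick}, and the flagness of $\D'$ --- which together convert the witness cycle for $\ind R/I = p$ into one for $\ind R'/I' = p$.
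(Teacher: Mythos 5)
Your proposal is correct and follows essentially the same route as the paper: the same choice $\D' = \ritter(\D,p+3)$, with (i) and (iii) read off from Lemma~\ref{l:regularityUp} and Lemma~\ref{l:faceVSthick}, the lower bound in (ii) from $(p+3)$-largeness of the quotient, and the upper bound from Haglund's $\ind \kk[\face{\D}] = \ind \kk[\D]$ combined with the identification of vertex links. The only difference is presentational: you argue (ii) directly by lifting an induced $(p+3)$-cycle from $\lk_{\D'}v = \face{\D}$ to $\D'$ via flagness, whereas the paper runs the equivalent contrapositive and leaves that flagness step implicit.
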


\begin{proof}
Let $p = \ind R/I$. Since $I=I_\D$ is quadratic, $p\ge 1$.  Let
$\D' = \ritter(\Delta,p+3)$.  The first item is
Lemma~\ref{l:regularityUp}.  The third item is
Lemma~\ref{l:faceVSthick}.  For the second item,
Lemma~\ref{l:thickeningklarge} implies that $\ind R'/I' \ge p$.  If
$\ind R'/I' > p$, then $\ind \kk[\face{\D}] > p$ since if there are no
induced $(p+3)$-cycles, then no link in $\D$ has an induced
$(p+3)$-cycle.  Furthermore, by
\cite[Proposition~B.1]{januszkiewicz10:_non},
$\ind \kk[\face{\D}] = \ind \kk[\Delta] = \ind R/I$.
\end{proof}

\begin{corollary}\label{c:regularityArbitrary}
For any positive integers $p$ and $r$, there exists a square-free
monomial ideal $I\subseteq R = \kk[x_1,\ldots ,x_{N(p,r)}]$, such that
$R/I$ satisfies $N_p$ and $\reg R/I = r$.
\end{corollary}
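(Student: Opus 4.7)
The plan is a short induction on $r$, using Theorem~\ref{t:generalThmFirstStatement} as the inductive step and bootstrapping from a tiny explicit example.

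For the base case $r=1$, I would take $\Delta$ to be the two-vertex discrete complex with no edges. Its Stanley--Reisner ideal $I=(x_1x_2)\subset\kk[x_1,x_2]$ has a $2$-linear (principal) resolution, so $\reg R/I=1$; and since $\Delta$ has no induced cycles at all, property $N_p$ holds trivially for every~$p$. Thus $N(p,1)=2$ will do.

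For $r\ge 2$, I would start the induction at $r=2$ by taking $\Delta_2=C_{p+3}$, the induced $(p+3)$-cycle. This complex is flag and its only induced cycle is itself, so it is $(p+3)$-large but not $(p+4)$-large; equivalently, $\ind R/I_{\Delta_2}=p$ and $N_p$ holds. Using Hochster's formula~\eqref{eq:reg1}, the only nontrivial cohomology of an induced subcomplex is $\widetilde H^1(C_{p+3};\kk)=\kk$ (every proper induced subcomplex is a disjoint union of contractible paths), so $\reg\kk[\Delta_2]=2$. The inductive step then applies Theorem~\ref{t:generalThmFirstStatement} directly: given $\Delta_{r-1}$ with $\reg\kk[\Delta_{r-1}]=r-1$ and $\ind R/I_{\Delta_{r-1}}=p$, the complex $\Delta_r=\ritter(\Delta_{r-1},p+3)$ produced by the theorem satisfies $\reg\kk[\Delta_r]=r$ and $\ind R/I_{\Delta_r}=p$ by items~(i) and~(ii). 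Thus $I=I_{\Delta_r}$ is as required, and $N(p,r)$ is the (finite) number of vertices of~$\Delta_r$; an explicit quantitative bound is deferred to Section~\ref{sec:bound-numb-vert}.

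Once Theorem~\ref{t:generalThmFirstStatement} is in hand, the argument is a formal induction with no serious obstacle. The one point to keep track of is that the Green--Lazarsfeld index remains exactly~$p$ along the iteration, which is precisely item~(ii); starting the induction at $C_{p+3}$, rather than at a complex of larger index, ensures that the construction stays calibrated to the fixed~$p$ at each step and that at no point do we need to invoke the theorem with an ill-defined parameter.
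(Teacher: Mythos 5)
Your construction and induction are exactly the paper's: start from the $(p+3)$-cycle $\Delta_2$ and set $\Delta_r=\ritter(\Delta_{r-1},p+3)$, using items (i) and (ii) of Theorem~\ref{t:generalThmFirstStatement} to step the regularity up while keeping the Green--Lazarsfeld index equal to~$p$ (your $r=1$ base case and the remark that $C_{p+3}$ has index exactly $p$ are fine). The gap is that Theorem~\ref{t:generalThmFirstStatement} is only proved under the hypothesis $\chara(\kk)=0$, whereas the corollary is stated over an arbitrary field $\kk$, and the regularity of a Stanley--Reisner ring genuinely depends on the characteristic. Your induction therefore only yields $\reg\kk[\D_r]=r$ when $\chara\kk=0$; for other fields you have asserted nothing, and this is precisely the point to which the paper devotes the second half of its proof.

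The missing argument is the characteristic-independence of the construction. One direction is universal coefficients: $\widetilde{H}^{i-1}(\D_r|_A;\Q)\neq 0$ forces nonvanishing over every field, so $\reg\kk[\D_r]\ge\reg\Q[\D_r]=r$ for all $\kk$. For the reverse inequality one cannot reuse Lemma~\ref{l:regularityUp} (it needs characteristic zero); instead, as in the paper, combine Lemma~\ref{l:faceVSthick} (every vertex link of $\D_r$ is $\face{\D_{r-1}}$), Proposition~\ref{p:reglink} (some vertex link has regularity at least $\reg\kk[\D_r]-1$), and Proposition~\ref{p:regFaceEqual} ($\reg\kk[\face{\D_{r-1}}]=\reg\kk[\D_{r-1}]$) to get $\reg\kk[\D_r]\le\reg\kk[\D_{r-1}]+1$ over any field; then, if $\reg\kk[\D_r]>\reg\Q[\D_r]$ for some $\kk$, this inequality propagates down to $\reg\kk[\D_2]>\reg\Q[\D_2]=2$, contradicting the characteristic-free computation for the cycle. (The $N_p$ claim, by contrast, is purely combinatorial -- $(p+3)$-largeness -- so it needs no such argument.) Adding this descent argument closes the gap and recovers the paper's proof.
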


\begin{proof}
Let $\Delta_2$ be the $(p+3)$-cycle, and inductively
$\Delta_r = \ritter (\Delta_{r-1}, p+3)$.  Then $\Delta_r$ satisfies
the conditions of the corollary if $\chara \kk = 0$.  To see that the
construction is independent of the field, assume that for some $\kk$,
$\reg \kk[\D_r] > \reg\Q[\D_r]$.  By Lemma~\ref{l:faceVSthick},
Proposition~\ref{p:reglink}, and Proposition~\ref{p:regFaceEqual},
$\reg \kk[\D_{r-1}] > \reg\Q[\D_{r-1}]$ and inductively
$\reg\kk[\D_2] > \reg\Q[\Delta_2]$ which is not the case.
\end{proof}

\begin{remark}\label{r:JS19.2}
In \cite[Corollary 19.2]{JS-non-positive-curvature}, Januszkiewicz and
\swi proved, for any $k\geq 6$ and $d\in\N$, the existence of a
$k$-large orientable $d$-dimensional pseudomanifold.  Together with
\cite[Theorem~1]{januszkiewicz2003hyperbolic} this could be used to
give a shorter proof of Corollary~\ref{c:regularityArbitrary}.  We
feel that such a proof would have been less insightful for commutative
algebra.
\end{remark}

\begin{remark}\label{r:NP}
The results in this section can also be used to strengthen a result of
Nevo and Peeva who studied a question of Francisco, H\`a and Van Tuyl.
The latter noticed (unpublished) that if $I\subset R$ is a quadratic
square-free monomial ideal such that $I^s$ has a linear resolution for
all $s\geq 2$, then $R/I$ satisfies $N_2$, and wondered if the
converse was true. In \cite[Counterexample~1.10]{NP13} Nevo and Peeva
gave a square-free monomial ideal $I\subset R$ such that $R/I$ has
property $N_2$ but $I^2$ does not have a linear resolution.  Using our
results, this can be extended to $N_p$ and any power as follows.
\begin{corollary}
For any integers $p,t\ge 2$ there exists a square-free monomial ideal
$I\subset R$ such that $R/I$ has property $N_{p}$ and $I^s$ does not
have a linear resolution for all $1\leq s\leq t$.\end{corollary}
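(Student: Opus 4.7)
The plan is to combine the Nevo--Peeva counterexample~\cite[Counterexample~1.10]{NP13} with Corollary~\ref{c:regularityArbitrary} and the $\ritter(-,p+3)$ construction of Section~\ref{s:InductiveConstruction}. The case $s=1$ is handled by Corollary~\ref{c:regularityArbitrary} directly: any ideal $J_1$ it produces with $\reg R/J_1\ge 2$ has $R/J_1$ satisfying $N_p$ while $J_1$ itself fails to have a linear resolution.

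For each $2\le s\le t$ we first aim to produce, on a disjoint variable set, a square-free monomial ideal $J_s$ with $R_s/J_s$ satisfying $N_2$ and $J_s^s$ without a linear resolution. For $s=2$ this is precisely Nevo--Peeva's ideal. For $s\ge 3$, an analogous construction obtained by adapting their pentagon-based complex (for instance by iterated subdivision or suitable joins designed to force a non-linear syzygy of $J_s^s$) yields the required examples. We then form the simplicial join $\Delta=\Delta_1 * \cdots * \Delta_t$, so that $I_0=I_\Delta=J_1+\cdots+J_t$ in the polynomial ring over the disjoint union of variables. Because the $J_s$ live in disjoint variable sets, the Tor modules of $I_0^s$ decompose via a K\"unneth-type argument, and the non-linear syzygy of $J_s^s$ persists as a non-linear syzygy of $I_0^s$; meanwhile any induced cycle in a join lies in a single factor, so $N_2$ is preserved. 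Applying $\ritter(\Delta,p+3)$ then yields, by Theorem~\ref{t:generalThmFirstStatement}, an ideal $I$ with $R/I$ satisfying $N_p$ and $\face{\Delta}$ as every vertex link. The fact that non-linearity of $I^s$ survives this thickening--quotient operation would be verified by tracking the specific reduced cohomology classes detecting it through Hochster-type formulas for Betti numbers of powers, in the spirit of Proposition~\ref{p:reglink} and Lemma~\ref{l:regularityUp}.

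The main obstacle is constructing $J_s$ for $s\ge 3$, since the original Nevo--Peeva argument is tailored to the second power. A cleaner route, which we would pursue first, is bootstrapping directly from Corollary~\ref{c:regularityArbitrary}: if one can establish that whenever $R/I$ satisfies $N_p$ and $\reg R/I$ is sufficiently large as a function of $s$ and $p$, the power $I^s$ must admit a non-linear syzygy, then a single ideal from Corollary~\ref{c:regularityArbitrary} with large enough regularity simultaneously witnesses the failure of a linear resolution for all $s\le t$, bypassing the piecewise combinatorial construction of the $J_s$ and the join step altogether.
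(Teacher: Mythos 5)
Your main construction has several genuine gaps, and the crucial step of the short route you mention at the end is exactly what you leave unproved. Concretely: (1) for $s\ge 3$ you have no construction of $J_s$ --- ``adapting the pentagon-based complex by iterated subdivision or suitable joins'' is not an argument, and the Nevo--Peeva example is indeed tailored to the second power; (2) the join step is false as stated: if $\Delta_1$ and $\Delta_2$ each contain a pair of non-adjacent vertices $\{a,b\}$ and $\{c,d\}$, then $a,c,b,d$ is an induced $4$-cycle in $\Delta_1*\Delta_2$, so the join of two non-simplex flag complexes never satisfies $N_2$, contrary to your claim that induced cycles of a join lie in a single factor; (3) there is no ``Hochster-type formula for Betti numbers of powers'' --- $I^s$ is not square-free for $s\ge 2$, so Hochster's formula does not apply, and the plan of tracking cohomology classes detecting non-linearity of $I^s$ through the $\ritter$ construction has no foundation in the tools of this paper.

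The paper's actual proof is the ``bootstrapping'' you describe in your last paragraph, but the implication you flag as the missing ingredient is supplied by Proposition~\ref{p:dubois}, which you never invoke: since $I$ is radical, $\sqrt{I^s}=I$, so $\reg R/I^s\ \ge\ \reg R/\sqrt{I^s}=\reg R/I$ for every $s\ge 1$. Taking $I$ from Corollary~\ref{c:regularityArbitrary} with property $N_p$ and $\reg R/I=2t$, a linear resolution of $I^s$ (which is generated in degree $2s$) would force $\reg R/I^s=2s-1\le 2t-1<2t$, a contradiction; hence the single ideal $I$ works for all $1\le s\le t$, with no need for the $J_s$, the join, or any re-application of the $\ritter$ construction. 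Without Proposition~\ref{p:dubois} (or some substitute bounding $\reg R/I^s$ from below by $\reg R/I$), your proposal does not close.
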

\begin{proof}
Set $r=2t$ and choose $I\subset R$ as in
Corollary~\ref{c:regularityArbitrary}.  Then $\reg(R/I) = 2t$, and
$\reg(R/I^s)\geq 2t$ for all $s\geq 1$ by Proposition~\ref{p:dubois}.
\end{proof}
Question~1.11 in \cite{NP13} asks whether $I_{\D}^s$ has a linear
resolution for $s\gg 0$ whenever $\kk[\D]$ satisfies $N_2$.  It remains
open and the construction yielding
Theorem~\ref{t:generalThmFirstStatement} provides examples worth
testing.  For an experimental investigation with computer algebra, the
number of variables involved would need a vast improvement, though.
\end{remark}

\section{Counting the number of vertices of $\ritter(\Delta,k)$}
\label{sec:bound-numb-vert}
For complexity theory in commutative algebra a bound on the number of
variables $N(r,p)$ in Corollary~\ref{c:regularityArbitrary} is
necessary.  We now derive such a bound by controlling the choice of
the torsion-free subgroup $H$ in step~\ref{it:pick} of the
construction of $\ritter(\Delta,k)$.

Each Coxeter group $W$ can be embedded in $\GL_n(\mathbb{R})$ by means
of its canonical representation
$\rho : W \to
\GL_n(\mathbb{R})$~\cite[Corollary~6.12.4]{davis2008geometry}.  This
representation starts from the cosine matrix $C = (c_{ij})_{ij}$ of a
Coxeter system whose entries are $c_{ij} = -\cos(\pi/m_{ij})$.  A
generator $s_{i}$ is represented by the linear map
$\rho(s_i) : x \mapsto x - 2 \sum_jc_{ij}x_j e_i$.  As the order of
every product of generators is $2$ or~$\infty$, right-angled Coxeter
groups embed also in $\GL_n(\mathbb{Z})$.
More specifically, since the cosine matrix has entries only
$-1, 0, 1$, the canonical representation matrices use only
$0, \pm 1, 2$.  An easy computation using the definition of the linear
map for one generator and $\cos(\pi/2) = 0$ shows that whenever
$w = s_{i_1}\cdots s_{i_l}$ is a spherical word, then it is
represented by the linear map
\begin{equation}\label{e:sphericalMatrix}
\rho(w) : x \mapsto x - 2\sum_jc_{i_1j}x_j e_{i_1} - \cdots - 2\sum_jc_{i_lj}x_je_{i_l}.
\end{equation}
We thus showed a simple fact about the entries of~$\rho(w)$.

\begin{lemma}\label{l:sphericalWords}
Let $\Ww(\D)$ be a right-angled Coxeter group with nerve $\D$ and
$d=\dim\D$.  For each spherical word $w \in\Ww(\D)$ of length $l$, the
matrix $\rho(w)$ uses only $0, \pm 1, 2$ for its entries and each of
its columns has at most $l$ entries equal to two.
\end{lemma}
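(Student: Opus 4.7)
The plan is to read off the matrix entries of $\rho(w)$ directly from formula~\eqref{e:sphericalMatrix}, which the authors have already recorded. Since $\rho$ is a group homomorphism and a spherical element of a right-angled Coxeter group admits a reduced expression whose letters are distinct and commute pairwise (as noted just after Remark~\ref{r:flagRight-angledCorrespondence}), I would assume without loss of generality that $w = s_{i_1}\cdots s_{i_l}$ is reduced, so that $c_{i_k i_{k'}} = -\cos(\pi/2) = 0$ whenever $k\ne k'$.

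Evaluating~\eqref{e:sphericalMatrix} at a basis vector gives
\[\rho(w)(e_m) \;=\; e_m \;-\; 2\sum_{k=1}^l c_{i_k m}\,e_{i_k},\]
so the $m$-th column of $\rho(w)$ is entirely determined by the tuple $(c_{i_1 m},\ldots,c_{i_l m})$. The analysis then splits into two cases depending on whether the column index $m$ lies in the support $\{i_1,\ldots,i_l\}$ of $w$.

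If $m\notin\{i_1,\ldots,i_l\}$, then the diagonal entry is $1$, every row outside $\{m\}\cup\{i_1,\ldots,i_l\}$ is $0$, and each of the $l$ rows $i_k$ carries the entry $-2c_{i_k m}$. Right-angledness forces $m_{i_k m}\in\{2,\infty\}$ and hence $c_{i_k m}\in\{0,-1\}$, so each such entry is either $0$ or $2$; this yields at most $l$ entries equal to $2$ in column $m$. If instead $m = i_{k_0}$ for some $k_0$, then rows $j\notin\{i_1,\ldots,i_l\}$ still carry $\delta_{jm} = 0$, the pairwise commutativity $c_{i_k i_{k_0}} = 0$ kills every row $i_k$ with $k\ne k_0$, and row $i_{k_0}$ contributes $1 - 2c_{i_{k_0}i_{k_0}} = -1$; no entry equal to $2$ occurs in this column.

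The argument is essentially bookkeeping once~\eqref{e:sphericalMatrix} is taken as input, and I do not anticipate any substantive obstacle. The only point requiring real care is verifying~\eqref{e:sphericalMatrix} itself for reduced spherical words, which is the ``easy computation'' alluded to just above the lemma and which uses precisely the same commutation identity $c_{i_a i_b} = 0$ that makes the two-case analysis above collapse so cleanly.
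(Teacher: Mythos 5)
Your proposal is correct and follows essentially the same route as the paper: the paper also derives the formula~\eqref{e:sphericalMatrix} from the one-generator definition together with $\cos(\pi/2)=0$ for pairwise commuting letters, and then reads the lemma off the columns exactly as you do. Your two-case bookkeeping for $m$ in or outside the support of $w$ just spells out the details the paper leaves to the reader.
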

We employ the projection $\GL_n (\Z) \to \GL_n(\Z/m\Z)$ to
find finite-index torsion-free subgroups $H$ as in step~\ref{it:pick}
of the construction in Section~\ref{s:InductiveConstruction}, so that
the size of $\ritter(\Delta, k)$ can be controlled.  To preserve
$k$-largeness, we need to choose $m$ so that no words of displacement
$< k$ reduce to the identity modulo~$m$.  This requires information
about the orders of elements of~$\GL_n(\Z/m\Z)$.

Fix $k>4$ and a $k$-large simplicial complex $\Delta$ of dimension~$d$
 with $n$ vertices.  For any $m\ge 2$ consider the canonical
homomorphism
\[
\pi_m: \GL_{n}(\Z)\to \GL_{n}(\Z/m\Z).
\]
Denote $\Gamma_m = \Ker(\pi_m)$ and let
$\Xi_m = \Gamma_m \cap \rho(\Ww(\Delta)) \subseteq \rho(\Ww(\Delta))$ be the
subgroup of $\rho(\Ww(\Delta))$ that lies in the kernel of $\pi_m$.
\begin{lemma}\label{l:subgrTorFree}
$\Xi_m$ is torsion free if $m > 2$.
\end{lemma}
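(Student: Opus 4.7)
The plan is to reduce the statement to a classical theorem of Minkowski. Since $\Xi_m \subseteq \Gamma_m = \Ker(\pi_m)$ by definition, it suffices to show the stronger claim that the full principal congruence subgroup $\Gamma_m \subset \GL_n(\Z)$ is torsion-free whenever $m > 2$.

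For a self-contained proof I would argue as follows. Suppose, for a contradiction, that $A \in \Gamma_m$ is a nontrivial element of finite order. Replacing $A$ by an appropriate power, we may assume its order is a prime $p$. Since $A \in \Gamma_m$, one can write $A - I = m^a B$ for an integer matrix $B$ and an integer $a \ge 1$ chosen maximal, so that $B$ has at least one entry not divisible by $m$. Expanding $A^p = I$ via the binomial theorem gives
\[
0 = A^p - I = p\, m^a B + \binom{p}{2} m^{2a} B^2 + \cdots + m^{pa} B^p,
\]
and dividing through by $m^a$ yields
\[
pB + \sum_{j=2}^{p} \binom{p}{j} m^{a(j-1)} B^j = 0.
\]
Reducing modulo $m$ (which kills every term in the sum, because $a \ge 1$) gives $pB \equiv 0 \pmod{m}$. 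In the case $\gcd(p,m) = 1$ this forces $m \mid B$ entrywise, contradicting the maximality of~$a$.

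The delicate case is $p \mid m$, and this is precisely where the hypothesis $m > 2$ is essential: the element $-I \in \Gamma_2$ shows that the statement fails when $m = 2$. I would handle it by a $p$-adic valuation comparison of the terms in the displayed equation, exploiting that $m > 2$ forces $v_2(m) \ge 2$ in the subcase $p = 2$, while $v_p(m) \ge 1$ with $p \ge 3$ in the remaining subcases. Alternatively, one may factor $A^p - I = (A - I)(I + A + \cdots + A^{p-1})$, note that $I + A + \cdots + A^{p-1} \equiv pI \pmod{m}$, and then show that the matrix $pI + mC$ so produced is invertible over $\Q$ by bounding its determinant $p$-adically. I expect the main obstacle to be bookkeeping in this $p \mid m$ case; the argument is essentially Minkowski's original one, and no further input from the Coxeter-group structure is needed.
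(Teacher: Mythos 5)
Your route is genuinely different from the paper's and, at the level of strategy, legitimate: since $\Xi_m\subseteq\Gamma_m$, it suffices that the principal congruence subgroup $\Gamma_m=\Ker\bigl(\GL_n(\Z)\to\GL_n(\Z/m\Z)\bigr)$ is torsion-free for $m>2$, which is Minkowski's classical theorem; simply citing it would prove a statement stronger than the lemma, with no Coxeter-theoretic input. The paper argues instead inside $\Ww(\Delta)$: every torsion element of a right-angled Coxeter group is an involution conjugate to a spherical word $s$, and $\pi_m(w)=1$ forces $\pi_m(s)=1$; by Lemma~\ref{l:sphericalWords} a nontrivial spherical word is represented by a matrix with entries in $\{0,\pm1,2\}$ (and diagonal entries $-1$ at the letters it uses), so it cannot be congruent to the identity modulo $m>2$, whence $s=1$ and $w=1$. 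That proof is three lines and uses only material already established; yours buys the stronger, more general fact about all of $\GL_n(\Z)$ at the cost of the classical input.

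Your self-contained version of Minkowski, however, has a real gap exactly in the case $p\mid m$ that you defer. First, the asserted implication ``$m>2$ forces $v_2(m)\ge 2$ when $p=2$'' is false: $m=6$ is a counterexample, and there you must instead switch to the odd prime divisor $3$ of $m$. Second, and more structurally, your maximality of $a$ is with respect to powers of the (possibly composite) integer $m$, so in the case $p\mid m$ neither the congruence $pB\equiv 0\pmod m$ nor a $p$-adic estimate contradicts it: the distinguished entry of $B$ may be divisible by $p$ without being divisible by $m$. The standard repair is to work one prime at a time: choose a divisor $d$ of $m$ that is either an odd prime or $4$ (possible since $m>2$), use $\Gamma_m\subseteq\Gamma_d$, and rerun your expansion with $a$ the minimal $q$-adic valuation of the entries of $A-I$ ($q=d$, resp.\ $q=2$); then for $q\neq p$ the term $pq^aB$ dominates modulo $q^{a+1}$, for $q=p$ odd one uses $p\mid\binom{p}{j}$ for $0<j<p$, and for $q=p=2$ one uses $a\ge 2$. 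Your alternative via invertibility of $I+A+\cdots+A^{p-1}=pI+mC$ is not a shortcut: that matrix is singular exactly when $A$ has a primitive $p$-th root of unity as an eigenvalue, which is essentially the assertion being proved. So either cite Minkowski's theorem outright or patch the $p\mid m$ case as above; as written, the argument does not go through.
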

\begin{proof}

It is well known that any torsion element in a right-angled Coxeter
group has order two and is in fact conjugate to a spherical word.  Let
$w\in \Xi_m$ be an involution and write $w = g^{-1}s g$ with some
spherical word~$s$ and $g\in \Ww(\Delta)$.  Then
$1 = \pi_m(w) = \pi_m(g)^{-1} \pi_m(s) \pi_m(g)$ implies
$\pi_m(s) = 1$ which for $m>2$ implies $s=1$ (by
Lemma~\ref{l:sphericalWords}) and finally~$w=1$.
\end{proof}
The subgroup to be used in step \ref{it:pick} is
$H(m) = \rho^{-1}(\Xi_m)$.  Let $w\in \Ww(\Delta)$.
As a function of the displacement and the dimension $d$ of~$\Delta$,
we determine an upper bound on
$a(w) = \max\{ |\rho(w)_{i,j}| : 1\le i,j \le n\}$, the maximum
absolute value of the entries of the corresponding matrix
$\rho(w) \in \GL_n(\Z)$.

\begin{lemma}\label{l:boundEntry2}
Let $w$ be a word of displacement less than $k$, then
$a(w) < (2d+3)^{k-1}$.
\end{lemma}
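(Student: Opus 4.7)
The plan is to convert the geometric hypothesis on displacement into a short factorization $w = t_1 t_2 \cdots t_l$ with $l \le k-1$ and each $t_j$ a spherical word, and then to iterate a matrix-norm estimate that exploits the combinatorics of $\rho(t)$ for spherical~$t$. For the factorization, recall that the cells of the Davis complex $\Sigma$ are precisely the spherical cosets, so two vertices of $\Sigma$ are joined by an edge in the $1$-skeleton of $\Th(\Sigma)$ if and only if they lie in a common spherical coset, equivalently if and only if their ratio is a spherical word. A path of length $l \le k-1$ from $e$ to $w$ in this $1$-skeleton therefore yields exactly such a factorization.

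Next, I would establish the column-sum bound $\|\rho(t)\|_1 \le 2d+3$ for every spherical word $t$, where $\|M\|_1 = \max_j \sum_i |M_{ij}|$. Writing $t$ in reduced form as $t = s_{i_1} \cdots s_{i_\ell}$ (with pairwise distinct, commuting letters, as is possible for spherical words in right-angled Coxeter groups), formula~\eqref{e:sphericalMatrix} shows that $\rho(t)$ agrees with the identity outside the rows indexed by $\{i_1, \dots, i_\ell\}$. Since $c_{i_k, i_{k'}} = 0$ whenever $k \ne k'$, the column indexed by each $i_k$ has a single nonzero entry, namely $-1$. Every other column $j$ has a $1$ on the diagonal together with entries $-2c_{i_k, j} \in \{0, 2\}$ in rows $i_k$, so its absolute column sum is at most $1 + 2\ell$. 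Since $\{s_{i_1}, \ldots, s_{i_\ell}\}$ is a face of the $d$-dimensional nerve~$\Delta$, we have $\ell \le d+1$, and the bound follows.

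Finally, I would combine these via the elementary estimate $a(AB) \le a(A)\,\|B\|_1$, which is immediate from $|(AB)_{ij}| \le \sum_k |A_{ik}|\,|B_{kj}|$. Applied to $\rho(w) = \rho(t_1)\,\rho(t_2)\cdots\rho(t_l)$ together with $a(\rho(t_1)) \le 2$ (Lemma~\ref{l:sphericalWords}) and submultiplicativity of $\|\cdot\|_1$, this gives
\[
a(w) \;\le\; a(\rho(t_1)) \prod_{j=2}^{l} \|\rho(t_j)\|_1 \;\le\; 2\,(2d+3)^{l-1} \;\le\; 2\,(2d+3)^{k-2} \;<\; (2d+3)^{k-1},
\]
where the last strict inequality uses $2 < 2d+3$; the degenerate case $l = 0$ (i.e.\ $w=e$) yields $a(w) = 1 < (2d+3)^{k-1}$. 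The only (mild) subtlety is to propagate the \emph{column-sum} norm rather than the row-sum norm: the row sums of $\rho(t)$ are bounded only by the total number of generators $n$, whereas the column sums benefit directly from the spherical hypothesis through $\ell \le d+1$.
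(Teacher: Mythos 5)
Your proof is correct and follows essentially the same route as the paper's: factor $w$ into at most $k-1$ spherical words, bound the absolute column sums of each spherical matrix by $1+2(d+1)=2d+3$ using Lemma~\ref{l:sphericalWords} and \eqref{e:sphericalMatrix}, and iterate starting from $a\le 2$ for the first factor to get $2(2d+3)^{k-2}<(2d+3)^{k-1}$. Your version merely makes explicit what the paper leaves implicit (the coset/edge justification of the factorization, the column-norm bookkeeping $a(AB)\le a(A)\,\|B\|_1$, and the trivial case $w=e$), which is a fine elaboration rather than a different argument.
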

\begin{proof}
A word of displacement less than $k$ is a product of at most $k-1$
spherical words.  When $w$ is a spherical word, it has length at most
$d+1$, and thus each column of $\rho(w)$ has at most $d+1$ entries $2$
and one entry~$1$ by Lemma~\ref{l:sphericalWords}.  This yields the
recursion $a(ws) \le (2d+3)a(w)$.  Since $a(s) = 2$ for any spherical
word, the bound follows.
\end{proof}
Our aim is to pick an integer $m$ so that any word in $H(m)$ has
displacement at least~\mbox{$p+3$}.  Lemma~\ref{l:boundEntry2} shows
that $m= (2d+3)^{p+2}$ is sufficient.  Given $m$, the number of
vertices of $\ritter(\Delta,p)$ is bounded by the size of
$\GL_{n}(\Z/m\Z)$ which is of the order~$m^{n^2}$.  Iterating the
construction of $\ritter(\Delta,p)$, we achieve the desired bound for
the number of variables needed in
Corollary~\ref{c:regularityArbitrary}.  To write it, we use Knuth's up
arrow notation~\cite{knuth1976mathematics} which is convenient for
iterative constructions.  Fortunately we can limit ourselves to two up
arrows which represent power towers.  Specifically,
$a\uparrow\uparrow b$ means $a^{a^{a^{\dots}}}$ exactly $b$ times.
\begin{theorem}\label{t:boundOnVertices}
For all $p$, there exists a family of ideals indexed by $r$ realizing 
Corollary~\ref{c:regularityArbitrary} with
\[
N(p,r+1) < (2(2\uparrow\uparrow(r-1)) + 1)^{(p+2)N(p,r)^2}.
\]
Furthermore, if $c_p$ is the smallest integer such that $2 \uparrow\uparrow c_{p} > p+2$, then
\[
N(p,r+1) < 2\uparrow\uparrow (r(r+c_p)).
\]
\end{theorem}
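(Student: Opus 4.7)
The plan is to make the construction of $\ritter(\Delta_r,p+3)$ in Section~\ref{s:InductiveConstruction} quantitative by choosing the torsion-free subgroup $H\subset \Ww(\Delta_r)$ explicitly from the canonical linear representation $\rho$, and then to control the size of the resulting quotient by the size of $\GL_n(\Z/m\Z)$ for a suitable modulus~$m$.

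The first step is to track the dimension of the iterated complexes. By Lemma~\ref{l:faceVSthick}, every vertex link of $\Delta_{r+1}=\ritter(\Delta_r,p+3)$ equals $\face{\Delta_r}$, and since a facet of dimension $d$ of $\Delta_r$ contributes a $(2^{d+1}-2)$-simplex to $\face{\Delta_r}$, one obtains
\[
\dim\Delta_{r+1}=\dim\face{\Delta_r}+1=2^{\dim\Delta_r+1}-1.
\]
Starting from $\dim\Delta_2=1$ (the $(p+3)$-cycle), this recursion gives $\dim\Delta_r+1=2\uparrow\uparrow(r-1)$, so that $2\dim\Delta_r+3=2(2\uparrow\uparrow(r-1))+1$.

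Next, set $n=N(p,r)$, $d=\dim\Delta_r$, and $m=(2d+3)^{p+2}$. Lemma~\ref{l:boundEntry2} shows that every nontrivial $w\in \Ww(\Delta_r)$ of displacement less than $p+3$ satisfies $a(w)<m$, so $\rho(w)\not\equiv I\pmod m$. Hence $H(m):=\rho^{-1}(\Xi_m)$ has displacement at least $p+3$, and since $m>2$, Lemma~\ref{l:subgrTorFree} guarantees that it is torsion-free; this is therefore a valid choice in step~\ref{it:pick} of the construction. The vertex set of $\ritter(\Delta_r,p+3)=\Th(\Sigma)/H(m)$ is in bijection with $\Ww(\Delta_r)/H(m)$, which via $\pi_m\circ\rho$ injects into $\GL_n(\Z/m\Z)$. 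Since $|\GL_n(\Z/m\Z)|<m^{n^2}$, this yields
\[
N(p,r+1)<m^{N(p,r)^2}=(2d+3)^{(p+2)N(p,r)^2}=\bigl(2(2\uparrow\uparrow(r-1))+1\bigr)^{(p+2)N(p,r)^2},
\]
which is the first inequality of the theorem.

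For the second inequality I would induct on $r$ using the elementary estimates $(2\uparrow\uparrow g)^2\leq 2\uparrow\uparrow(g+1)$ for $g\geq 2$, $(2\uparrow\uparrow a)(2\uparrow\uparrow b)\leq 2\uparrow\uparrow(\max(a,b)+1)$, and $p+2<2\uparrow\uparrow c_p$ by the definition of $c_p$. The base case $N(p,2)=p+3<2\uparrow\uparrow(1+c_p)$ is immediate. Inserting the inductive hypothesis $N(p,r)<2\uparrow\uparrow((r-1)(r-1+c_p))$ into the first bound, one absorbs the three factors $N(p,r)^2$, $p+2$, and $2\uparrow\uparrow(r-1)$ into a single power tower, each absorption increasing the tower height by at most a constant; the outer exponentiation by $2$ raises the height by one more, and the quadratic target $r(r+c_p)$ leaves ample room for these constant increments. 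The main obstacle is the bookkeeping in these nested tower inequalities; the deliberate choice of a quadratic (rather than the sharper linear) bound in $r$ is precisely what makes this bookkeeping painless.
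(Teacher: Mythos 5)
Your proposal is correct and follows essentially the same route as the paper's proof: the same iteration $\Delta_{r+1}=\ritter(\Delta_r,p+3)$ starting from the $(p+3)$-cycle, the same choice $H(m)$ with $m=(2\dim\Delta_r+3)^{p+2}$ justified by Lemmas~\ref{l:subgrTorFree} and~\ref{l:boundEntry2}, the same dimension recursion $\dim\Delta_r+1=2\uparrow\uparrow(r-1)$, and the bound $|\GL_n(\Z/m\Z)|<m^{n^2}$ for the vertex count. Your second part, an induction with explicit tower-absorption inequalities, is just a more explicit rendering of the paper's ``remove the parentheses from the power tower'' argument, and the bookkeeping you sketch does close (one checks $r(r+c_p)-(r-1)(r-1+c_p)=2r-1+c_p\ge 4$, leaving the claimed room).
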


\begin{proof}
Let $\Delta_2$ be the $(p+3)$-cycle which implies $N(p,2) = p+3$.  Let
$\Delta_{r+1} = \ritter (\Delta_{r}, p)$, where the subgroup in step
\ref{it:pick} is chosen as $H(m_{r+1})$ with
$m_{r+1} = (2d_r+1)^{p+2}$.  Here $d_r = \dim \Delta_r + 1$ and thus
$d_2 = 2$.  We have the recursion $d_{r+1} = 2^{d_r}$, which yields
$d_{r} = 2\uparrow\uparrow (r-1)$.  The number of vertices of
$\Delta_{r+1}$ is bounded by the order of $\GL_n(\Z/m_{r+1}\Z)$.
Estimating this order as $m_{r+1}^{N(r,p)^2}$ we obtain the recursive bound.

For  the second part  we use the fact that removing parenthesis from a
power tower does not make the expression smaller by generalizations of
$\left(2^2\right)^{(2^2)} < 2^{2^{2^2}}$. We thus get
\begin{align*}
  N(p,r+1) & < (2\uparrow\uparrow r)^{(p+2)N(p,r)^2} \\
           & < (2\uparrow\uparrow r)^{(2\uparrow \uparrow c_p)
	     N(p,r)^2} \\
           & < (2 \uparrow \uparrow (r + c_p))^{N(p,r)^2}.
\end{align*}
Now by a simple induction, the structure of the expression on the
right is continued exponentiation of $2$ for at most $r(r+c_p)$ times,
but with certain parenthesis inside the tower.  Removing the
parentheses we conclude.
\end{proof}
We hope that the bound in Theorem~\ref{t:boundOnVertices} can be
improved significantly.  To justify this hope we illustrate vast
improvements in a simple example.  Let $\Delta$ be the 5-cycle.
The right-angled Coxeter group with nerve $\Delta$ has the following
Coxeter and cosine matrices
\[
\begin{pmatrix*}[c]
1	& 2	  & \infty  & \infty  & 2	\\
2	& 1	  & 2	    & \infty  & \infty	\\
\infty	& 2	  & 1	    & 2	      & \infty	\\
\infty	& \infty  & 2	    & 1	      & 2	\\
2	& \infty  & \infty  & 2	      & 1	\\
\end{pmatrix*}, \qquad
C = 
\begin{pmatrix*}[r]
1   & 0	  & -1	& -1  & 0   \\
0   & 1	  & 0	& -1  & -1  \\
-1  & 0	  & 1	& 0   & -1  \\
-1  & -1  & 0	& 1   & 0   \\
0   & -1  & -1	& 0   & 1   \\
\end{pmatrix*}.
\]
The generators of the standard representation of this Coxeter group
are
\[
s_1 \mapsto
\begin{pmatrix*}[r]
-1  & 0	& 2 & 2	& 0   \\
0   & 1	& 0 & 0	& 0   \\
0   & 0	& 1 & 0	& 0   \\
0   & 0	& 0 & 1	& 0   \\
0   & 0	& 0 & 0	& 1   \\
\end{pmatrix*}
\qquad \dots \qquad
s_5 \mapsto \begin{pmatrix*}[r]
1   & 0	& 0 & 0	& 0   \\
0   & 1	& 0 & 0	& 0   \\
0   & 0	& 1 & 0	& 0   \\
0   & 0	& 0 & 1	& 0   \\
0   & 2	& 2 & 0	& -1  \\
\end{pmatrix*}.
\]
Table~\ref{tab:words} gives the maximum absolute value of entries of
words of length~$l$ in $\Ww(\Delta)$.  It shows that the prime number
$p=1811$ would certainly suffice to guarantee that no word of
displacement $\le 5$ (which all have length $\le 10$) is in the kernel
of the reduction modulo~$p$.
\begin{table}[htpb]
\centering
\begin{tabular}{|l||c|c|c|c|c|c|c|c|c|c|}
  \hline
  word length & 1 & 2 & 3 & 4	& 5   & 6   & 7   & 8	& 9   & 10    \\
  \hline
  entry size  & 2 & 4 & 8 & 18  & 39  & 84  & 180 & 388 & 836 & 1801  \\
  \hline
\end{tabular}
\caption{Entry sizes in words\label{tab:words}}
\end{table}
However, it can be checked algorithmically (we used the Coxeter group
functionality in \textsc{sage}~\cite{sage}) that no word of length at
most $10$ is in the kernel of the reduction modulo~$7$.  In the
reduction modulo~$5$, however, $(s_1s_3)^5$ maps to the identity.  We
also checked words of length $12$ for the Coxeter group corresponding
to the heptagon.  There $7$ is not large enough, as for example
$(s_1s_3s_1s_5)^3$ goes to the identity.

In the example of the 5-cycle, the bound derived in
Theorem~\ref{t:boundOnVertices} yields $N(2,3) < 5^{100}$, while using
$m=7$ yields~$N(2,3)<7^{25}$.  In contrast one can exhibit a 5-large 
triangulation of a 2-sphere with 12 vertices.
Nevertheless, a good understanding of representations of Coxeter
groups in finite characteristic should yield better estimates than
Theorem~\ref{t:boundOnVertices}.

The integer $m_r$ used in the recursive construction of $\Delta_r$ in
Theorem~\ref{t:boundOnVertices} currently depends on the dimension
which grows very quickly.  It is conceivable that for each $p$ there
is a uniform bound, independent of $r$.
\begin{question}
Is there a bound for the integer $m_r$ that depends only on $p$ and
not on~$r$?
\end{question}
\bibliographystyle{amsalpha}
\bibliography{flagnosquare}
\end{document}